\DeclareSymbolFont{rsfs}{U}{rsfs}{m}{n}
\DeclareSymbolFontAlphabet{\mathscrsfs}{rsfs}
\theoremstyle{definition}
\newtheorem{Def}{Definition}[section]
\newtheorem{Rmk}[Def]{Remark}
\theoremstyle{plain}
\newtheorem{Prop}[Def]{Proposition}
\newtheorem{Thm}[Def]{Theorem}
\newtheorem{Lemma}[Def]{Lemma}
\newtheorem{Cond}[Def]{Condition}
\newcommand{\R}{\mathbb{R}}
\newcommand{\Z}{\mathbb{Z}}
\newcommand{\N}{\mathbb{N}}
\renewcommand{\epsilon}{\varepsilon}
\title{On the pointwise regularity of the Multifractional Brownian Motion and some extensions}
\author{C. Esser\footnote{ Universit\'e de Li\`ege, D\'epartement de math\'ematique -- zone Polytech 1, 12 all\'ee de la D\'ecouverte, B\^at. B37, B-4000 Li\`ege. celine.esser@uliege.be} and L. Loosveldt\footnote{Universit\'e du Luxembourg, UR en Math\'ematiques, Maison du nombre, 6 avenue de la Fonte, L-4364, Esch-sur-Alzette, Luxembourg. laurent.loosveldt@uni.lu.}}
\begin{document}

\maketitle

\begin{abstract}
We study the pointwise regularity of the Multifractional Brownian Motion and in particular, we get the existence of slow points. It shows that a non self-similar process can still enjoy this property. We also consider various extensions of our results in the aim of requesting a weaker regularity assumption for the Hurst function without altering the regularity of the process.
\end{abstract}
\noindent \textit{Keywords}:  Multifractional Brownian Motion, Random Wavelets Series, modulus of continuity, slow/ordinary/rapid points

\noindent  \textit{2020 MSC}: 60G22, 60G17, 26A15, 42C40

\section*{Introduction}

Given a compact set $[a,b]$ of $(0,1)$ and a function $H \, : \, \R \to [a,b]$, the Multifractional Brownian Motion (MBM) is the process defined in \cite{MR1462329} by the harmonizable representation
\begin{equation} \label{eqn:defmbm}
B_{H}(t) = \int_{\R} \frac{e^{i t \xi}-1}{|\xi|^{H(t)+\frac{1}{2}}} \, d\widehat{W}(\xi),
\end{equation}
where $d\widehat{W}$ is the ``Fourier transform'' of the real-valued white noise measure $dW$. The MBM has also been defined alternatively and independently in  \cite{plv95} by the moving average representation
\begin{equation} \label{eqn:defmbm2}
B_{H}'(t) = \int_{\R} \left( |t-s|^{H(t)-\frac{1}{2}}-|s|^{H(t)-1/2} \right) \, d{W}(s).
\end{equation}
When $H( \cdot)=h$ is a constant function, we recover the  equivalent definitions of the Fractional Brownian Motion (FBM) of Hurst parameter $h$. For this reason,  $H$ is usually called the Hurst function.
Note that the fundamental equality
\begin{align*}
\int_\R f(s) \, dW(s)=\int_\R \widehat{f}(\xi) \, d\widehat{W}(\xi),
\end{align*}
which holds almost surely for all function $f \in L^2(\R)$, insures that, up to a multiplicative deterministic smooth, bounded and non-vanishing function, the processes \eqref{eqn:defmbm} and \eqref{eqn:defmbm2} are identical, see \cite{MR1726364}. 

Generally, one requires that the function $H$ is $\beta$-H\"olderian, for a $\beta>b$. With this assumption, one can recover some of the most fundamental properties of FBM, namely
\begin{enumerate}[(a)]
\item \textbf{Local asymptotic self-similarity: }\cite{MR1462329,MR1922445,MR1967698} for all $t \in \R$,
\[ \lim_{\rho \to 0^+} \text{Law} \left\{ \frac{B_H(t+\rho s)-B_H(t)}{\rho^{H(t)}}, \, s \in \R \right\} = \text{Law} \{B_{H(t)}(s), \,s \in \R \} ,\]
where $\{B_{H(t)}(s), \,s \in \R \}$ is FBM with constant Hurst parameter $H(t)$. In particular, if the function $H$ is non constant, the process $\{ B_{H}(t), \, t \in \R\}$ is not self-similar.

\item \textbf{Uniform modulus of continuity: }\cite{MR1462329} on an event of probability 1, for every open bounded subset $D$ of $\R$,  one has
\[ \limsup_{s,t \in D, |s-t| \to 0^+} \frac{|B_H(s)-B_H(t)|}{|s-t|^{\underline{H}_D} \sqrt{\log|s-t|^{-1}}} = \sqrt{2} C_D\]
 where  $\underline{H}_D = \inf_{t \in D} H(t)$ and $C_D = \sup_{t \in H^{-1}(\underline{H}_D) \cap \overline{D}} C(t)$ with  
\[ C(t) = \sqrt{ \int_{\R} \frac{1-\cos^2(xt)}{|x|^{1+2H(t)}} \, dx}, \quad \forall t \in \R.\]

\item \textbf{Law of iterated logarithm: }\cite{MR1462329} on an event of probability 1, for all $t \in \R$,
\[ \limsup_{s \to t} \frac{|B_H(s)-B_H(t)|}{|s-t|^{H(t)} \sqrt{\log \log|s-t|^{-1}}} = \sqrt{2}C(t) \]

\item \textbf{Existence of local-time}, see e.g. \cite{MR2219711,MR2348754}.
\end{enumerate}
We also refer to the book \cite{MR3839281} for a very complete review on the subject.

In the present paper, we will be particularly interested in the pointwise regularity of MBM. Using well-established terminologies by Kahane \cite{MR833073}, (c) here over means that almost surely, almost every point $t \in \R$ is ordinary while by (b), we know that, for some points called rapid, the oscillation is faster. Moreover, concerning the Brownian Motion (BM), Kahane has pointed out in \cite{MR833073} the existence of a third family of points, presenting a slower oscillation. Recently, we have extended this fact to FBM and moreover, we have shown that this property is somehow exceptional using two different notions of genericity, see  \cite{esserloosveldt}.  A natural question is therefore to understand where does this particularity come from. The extension from BM to FBM has underlined that the existence of slow points does not depend on the Markovian property of the process. The results obtained in \cite{dawloosveldt} have highlighted the fact that it does not depend on the Gaussianity of the process neither: indeed, the (generalized) Rosenblatt process, which is known to be non-Gaussian, does present slow points in its pointwise regularity. Here, we show that this does not depend on the self-similarity neither, as MBM displays slow points.


The paper is organized as follows.
Section 1 is devoted to the proof of upper bounds for the regularity of the MBM, while the optimality of these bounds is studied in Section 2. Note that these sections rely deeply on a wavelet-type expansion of MBM first given in \cite{MR2177638}. In Section 3,   by slightly modifying  this expansion,  we show that one can relax some hypothesis made on the function $H$ without altering the pointwise regularity properties of the process. This model could be of interest for simulation purposes, when we have to consider multifractional phenomena with few regularity assumptions for the Hurst function.

\section{A sharp upper bound for some oscillations}

The definition \eqref{eqn:defmbm} of MBM naturally leads  to consider the following Gaussian field.
\begin{Def}
The generator of Multifractional Brownian motion (gMBM) is the Gaussian field $\{ B(t,\theta) \, : \, (t, \theta) \in \R \times (0,1) \}$ defined as
\[ B(t,\theta) = \int_{\R} \frac{e^{i t \xi}-1}{|\xi|^{\theta+\frac{1}{2}}} \, d\widehat{W}(\xi).\]
\end{Def}
From \eqref{eqn:defmbm}, it is clear that, for all $t \in \R$, we have $B_H(t)=B(t,H(t))$.

A wavelet-type expansion of gMBM is the crucial point for our analysis of the pointwise regularity of MBM. Let us introduce it in a few words, details can be found in the paper \cite{MR2177638}. In what follows, $\{2^{j/2} \psi(2^j \cdot -k)  \, : \, (j,k) \in \Z^2 \}$ stands for the Lemari\'e-Meyer orthonormal wavelet base of the Hilbert space $L^2(\R)$, introduced in \cite{MR864650}. Its particular features are the fact that the mother wavelet $\psi$ belongs to the Schwartz class of $C^\infty$ functions whose derivatives of any order have fast decay and that $\widehat{\psi}$ is compactly supported and is vanishing in a neighbourhood of $0$. Thanks to these facts, the function
\[ \Psi \, : \, (t, \theta) \in \R^2 \mapsto \int_{\R} \frac{e^{it\xi} \widehat{\psi}(\xi)}{|\xi|^{\theta + \frac{1}{2}}} \, d\xi \]
is well-defined. Moreover, one can check that $\Psi$ belongs to $C^\infty(\R^2)$ and satisfies the following fast decay property: for all $a,b \in \R$ and $L,m,n \in \N$, 
\begin{equation}\label{eqn:fastdecay}
\sup_{\theta \in [a,b]} \sup_{t \in \R} (3+|t|)^L |D^m_t D^n_\theta \Psi(t,\theta)| < \infty,
\end{equation}
see \cite[Lemma 2.1]{MR2177638} for a proof of this fact. This function leads to the following expansion for gMBM, for all $\R \times (0,1)$, we have
\begin{equation}\label{eqn:wavexp}
B(t,\theta) = \sum_{j \in \Z} \sum_{k \in \Z} 2^{-j \theta} \varepsilon_{j,k}  \left( \Psi(2^j t-k, \theta)-\Psi(-k, \theta) \right),
\end{equation}
where $(\varepsilon_{j,k})_{(j,k) \in \Z^2}$ is a sequence of i.i.d. $\mathcal{N}(0,1)$ random variables. The convergence of this series holds  in $L^2(\Omega)$, as a consequence of Wiener isometry, but also, most importantly in our case, almost surely uniformly on every compact subset of $\R \times (0,1)$. Among other things, this fact is a consequence of the following important estimate that can be done on the family $(\varepsilon_{j,k})_{(j,k) \in \Z^2}$.

\begin{Lemma}\label{boundnormal} \cite{MR2027888}
Let $(\varepsilon_{j,k})_{(j,k) \in \Z^2}$ be a sequence of independent $\mathcal{N}(0,1)$ random variables. There are an event $\Omega^*_0$ of probability $1$ and a positive random variable $C_1$ of finite moment of every order such that, for all $\omega \in \Omega^*_0$ and $(j,k) \in \Z^2$, the inequality
\begin{align}\label{eq:boundnorm}
|\varepsilon_{j,k}(\omega)| \leq C_1(\omega) \sqrt{\log(3+j+|k|)}
\end{align}
holds.
\end{Lemma}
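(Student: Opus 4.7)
The plan is to combine the standard Gaussian tail bound with the Borel--Cantelli lemma, and then convert the tail estimate for the supremum into moment bounds via the layer-cake formula. The key observation is that a $\mathcal{N}(0,1)$ variable $\varepsilon$ satisfies $\PP(|\varepsilon|>t) \leq 2 e^{-t^2/2}$ for every $t>0$, which produces super-polynomial decay as soon as $t$ grows like $\sqrt{\log n}$.

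Concretely, I would fix $\lambda>0$ and introduce the events $A_{j,k}^{\lambda} = \{\,|\varepsilon_{j,k}| > \lambda\sqrt{\log(3+|j|+|k|)}\,\}$ (reading the $j$ in the statement as $|j|$, since the series is indexed by $\Z^2$). The Gaussian tail bound yields
\[
\PP(A_{j,k}^{\lambda}) \leq 2 (3+|j|+|k|)^{-\lambda^{2}/2},
\]
and a straightforward comparison with $\sum_{n\geq 0} (n+3) (n+3)^{-\lambda^{2}/2}$ shows that the double series $\sum_{(j,k)\in\Z^{2}} \PP(A_{j,k}^{\lambda})$ is finite as soon as $\lambda$ is large enough (say $\lambda^{2}/2 > 2$). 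The Borel--Cantelli lemma then gives an event $\Omega_{0}^{*}$ of probability $1$ on which, for each such $\lambda$, only finitely many of the $A_{j,k}^{\lambda}$ occur. Setting
\[
C_{1}(\omega) := \sup_{(j,k)\in\Z^{2}} \frac{|\varepsilon_{j,k}(\omega)|}{\sqrt{\log(3+|j|+|k|)}},
\]
we conclude that $C_{1}$ is finite on $\Omega_{0}^{*}$ and that the inequality \eqref{eq:boundnorm} holds by definition.

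To upgrade almost-sure finiteness to finite moments of every order, I would estimate the tail of $C_{1}$ itself. A union bound gives
\[
\PP(C_{1} > \lambda) \leq \sum_{(j,k)\in\Z^{2}} \PP(A_{j,k}^{\lambda}) \leq 2 \sum_{(j,k)\in\Z^{2}} (3+|j|+|k|)^{-\lambda^{2}/2},
\]
and for large $\lambda$ the last sum is dominated by its leading terms, which decay like $3^{-\lambda^{2}/2}$ up to polynomial corrections. Hence $\PP(C_{1}>\lambda)$ decays faster than any power of $\lambda$, and the layer-cake identity $\mathbb{E}[C_{1}^{p}] = p\int_{0}^{\infty} \lambda^{p-1} \PP(C_{1} > \lambda)\, d\lambda$ yields $\mathbb{E}[C_{1}^{p}] < \infty$ for every $p>0$.

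There is no real obstacle in this argument; the only care needed is in step two, where one must make sure the double sum over $\Z^{2}$ actually converges. This forces the threshold $\lambda$ to be chosen not too small, but since $C_{1}$ is then taken as the supremum over all scales, the final random variable is the same. The whole argument is, of course, exactly the one used in \cite{MR2027888}, and would be essentially unchanged if the indexing set were $\N_{0}\times\Z$ instead of $\Z^{2}$.
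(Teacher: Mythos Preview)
Your argument is correct and is precisely the standard route to this lemma: Gaussian tail bound, Borel--Cantelli for a.s.\ finiteness of the supremum, and a union-bound tail estimate on $C_1$ fed into the layer-cake formula for moments. The paper itself does not supply a proof of this lemma---it simply cites \cite{MR2027888}---so there is nothing to compare against beyond noting that your proof is exactly the one that reference would give. Your reading of $j$ as $|j|$ is the right fix for the statement as written (otherwise $\log(3+j+|k|)$ is ill-defined for $j\ll 0$), and your care about the threshold $\lambda^2/2>2$ ensuring convergence of the double sum is the only genuine checkpoint; everything else is routine.
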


This last Lemma is also useful to show that the sample paths of the field
\[ \overline{B}(t,\theta):= \sum_{j=- \infty}^{-1} \sum_{k \in \Z} 2^{-j \theta} \varepsilon_{j,k}  \left( \Psi(2^j t-k, \theta)-\Psi(-k, \theta) \right) \]
are almost surely $C^\infty$ functions.

In the sequel, we assume the following condition for the Hurst function. It is slightly less restrictive that the original uniform H\"older regularity assumption required in \cite{MR1462329,plv95}. Note that it is the condition used in \cite[Theorem 1.89]{MR3839281} to study the pointwise H\"older exponent of MBM.

\begin{Cond}\label{condi}
The Hurst function $H \, : \, \R \to [a,b]$, with $0<a<b<1$, is such that for all $t \in \R$, there exists $\gamma \geq H(t)$ such that $H$ belongs to the pointwise H\"older space $C^\gamma (t)$, which means that there exist $R_t>0$ and $c_t>0$ such that
\[ |H(s)-H(t)| \leq c_t |s-t|^{\gamma}\]
for all $s \in \R$ with $|s-t| \leq R_t$.
\end{Cond}

Our main result of this section can be stated as follows.

\begin{Thm}\label{thm:slow}
If the function $H \, : \, \R \to [a,b]$ satisfies the Condition \ref{condi} then almost surely, for every non-empty interval $I$ of $\R$, there exists $t \in I$ such that
\begin{equation}\label{eqn:slow}
\limsup_{s \to t} \frac{|B_H(s)-B_H(t)|}{|s-t|^{H(t)}} < \infty.
\end{equation}
\end{Thm}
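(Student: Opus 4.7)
The plan is to reduce the problem to a slow-point construction for the generator $B(\cdot,\theta)$ at the fixed parameter $\theta=H(t)$, and to obtain such a $t$ through a Galton-Watson percolation argument on the binary dyadic tree, starting from the decomposition
\[
B_H(s)-B_H(t)=\bigl[B(s,H(s))-B(s,H(t))\bigr]+\bigl[B(s,H(t))-B(t,H(t))\bigr].
\]
For the first bracket, differentiating \eqref{eqn:wavexp} in $\theta$ and using the fast decay property \eqref{eqn:fastdecay} of the derivatives of $\Psi$ together with Lemma~\ref{boundnormal} shows that $\theta\mapsto B(s,\theta)$ is almost surely Lipschitz on $[a,b]$, uniformly for $s$ in a bounded set. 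Condition~\ref{condi} then yields $|H(s)-H(t)|\leq c_t|s-t|^{\gamma}\leq c_t|s-t|^{H(t)}$ for $|s-t|\leq\min(R_t,1)$, so this first bracket is automatically $O(|s-t|^{H(t)})$ and the whole problem reduces to the second bracket.

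For the second bracket, I would fix $\theta=H(t)$ and split the wavelet series at the critical scale $J$ with $2^{-J}\sim|s-t|$: a first-order Taylor expansion of $\Psi$ in its first variable handles the scales $j\leq J$, whereas the triangle inequality handles $j>J$. Thanks to the fast decay \eqref{eqn:fastdecay} of $\Psi$ and $\partial_1\Psi$, in each case only the coefficients $\varepsilon_{j,k}$ with $k$ close to $2^jt$ (or $2^js$) contribute significantly, the remaining ones producing a convergent tail controllable via Lemma~\ref{boundnormal}. If the chosen $t$ moreover satisfies $|\varepsilon_{j,k}|\leq M$ for all $k$ with $|k-2^jt|\leq N$ and all sufficiently large $j$ (for some fixed window size $N$ and some constant $M$), then the dominant term at each scale $j\leq J$ behaves like $2^{j(1-\theta)}|s-t|$, the geometric sum over $j\leq J$ produces exactly $|s-t|^{\theta}$, and a symmetric computation takes care of $j>J$.

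To locate such a point $t$ in an arbitrary non-empty interval $I$, I would set up a Galton-Watson tree rooted at a dyadic sub-interval of $I$: a node at scale $j$ corresponds to a dyadic interval $[k2^{-j},(k+1)2^{-j}]$, and is declared open when $|\varepsilon_{j,k+k'}|\leq M$ for all $|k'|\leq N$. For $M$ large, the probability of being open tends to $1$, so after possibly descending several generations between branching points in order to recover genuine independence between siblings, the resulting branching process is supercritical with survival probability at least $1-\delta$ for any prescribed $\delta>0$. An infinite open path encodes a point $t\in I$ satisfying the hypothesis of the previous paragraph; letting $\delta\to 0$ and intersecting over the countable collection of dyadic intervals produces, on an event of probability one, such a $t$ in every non-empty interval.

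The main obstacle lies in the execution of the wavelet estimate for the second bracket: merely bounding $|\varepsilon_{j,k}|$ by $M$ inside the central window does not suffice, one must also check that the tail sum over the remaining $k$'s, which is only controlled via the logarithmic growth in Lemma~\ref{boundnormal}, does not contaminate the bound with a spurious $\sqrt{\log(1/|s-t|)}$ factor. This will be achieved by exploiting the arbitrarily fast polynomial decay of $\Psi$ in $|k-2^jt|$, so that, after combining with the geometric weight $2^{-j\theta}$ and choosing the window $N$ large enough, the contribution of the tail is summable and absorbed into the constant.
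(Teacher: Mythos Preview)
Your overall architecture is exactly that of the paper: split $B_H(s)-B_H(t)$ into a $\theta$-variation handled by Lipschitz continuity of $\theta\mapsto B(s,\theta)$ (Lemma~\ref{lem:fromAB} in the paper) and a $t$-variation at frozen $\theta=H(t)$ handled by splitting scales at $J$ with $2^{-J}\sim|s-t|$, together with a percolation-type construction of a good point $t$. The paper proceeds the same way, invoking Theorem~\ref{thm:procedure} for the last step.

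There is, however, a genuine gap in your slow-point construction, and it is precisely the one you flag at the end but do not actually resolve. A \emph{fixed} window $|k-2^jt|\le N$ cannot yield \eqref{eqn:slow}. Outside the window the only available bound is $|\varepsilon_{j,k}|\le C_1\sqrt{\log(3+j+|k|)}$, and since $t\in[0,1)$ forces $|k|\sim 2^jt$ for the relevant $k$'s, the tail sum at scale $j$ is of order
\[
\sum_{|k-k_j(t)|>N}\frac{\sqrt{\log(3+j+|k|)}}{(3+|2^jx-k|)^{L}}
\;\ge\; c\,\frac{\sqrt{j}}{N^{L-1}},
\]
no matter how large $L$ is. After multiplying by $2^{j(1-\theta)}|s-t|$ and summing geometrically over $j\le J$, the dominant term is $|s-t|^{\theta}\sqrt{J}/N^{L-1}\sim |s-t|^{H(t)}\sqrt{\log|s-t|^{-1}}/N^{L-1}$, which is the \emph{rapid}-point modulus, not the slow one. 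The same contamination occurs for $j>J$ when you evaluate at $s$: the coefficients $\varepsilon_{j,k}$ with $k$ near $2^js$ sit at distance $\sim 2^{j-J}$ from $2^jt$, hence outside any fixed window, and again only the $\sqrt{j}$ bound is available.

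What is needed---and what the paper imports as Theorem~\ref{thm:procedure} from \cite{esserloosveldt}---is a \emph{graded} control: points $t$ for which $|\varepsilon_{j,k}|\le 2^{l}\mu$ whenever $2^{m(l-1)}<|k-k_j(t)|\le 2^{ml}$, for some $m$ with $1/m<a$. This replaces the uncontrolled $\sqrt{j}$ by a factor $|k-k_j(t)|^{1/m}$ that the fast decay of $\Psi$ \emph{can} absorb, and it is exactly what makes Lemma~\ref{lemma2}(2) work for the terms at $s$ with $j>J$. Your Galton--Watson idea is the right engine, but the event you declare ``open'' at each node must encode this whole graded family of thresholds, not just a bounded window; this is the Kahane-type refinement carried out in \cite{esserloosveldt}.
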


The proof of Theorem \ref{thm:slow} uses the wavelet series representation \eqref{eqn:wavexp} that gives for all $t\in \R$
\begin{equation}\label{eqn:waveletdecomp}
B_H(t) = \sum_{j \in \Z} \sum_{k \in \Z} 2^{-j H(t)} \varepsilon_{j,k}  \left( \Psi(2^j t-k,  H(t))-\Psi(-k,  H(t)) \right).
\end{equation}
First, note that Condition \ref{condi} and the fact that the trajectories of the field $\overline{B}$ are almost surely $C^\infty$ functions  entail that almost surely \begin{equation}\label{eqn:reglowfrequency}
\limsup_{s \to t} \frac{|\overline{B}(s,H(s))-\overline{B}(t,H(t))|}{|s-t|^{H(t)}} < \infty
\end{equation}
for all $t\in \R$.
Therefore, we only need to analyse the high frequency part of MBM, which can be done through the field
\begin{equation}\label{eqn:highfrequency}
\widetilde{B}(t,\theta):=\sum_{j \in \N} \sum_{k \in \Z} 2^{-j \theta} \varepsilon_{j,k}  \left( \Psi(2^j t-k, \theta)-\Psi(-k, \theta) \right).
\end{equation}

\begin{Rmk}\label{rmk:reduc01}
Before going further, let us remark that one can reduce our work to
the proof of the existence of a point satisfying \eqref{eqn:slow} in the interval $[0,1)$. Indeed, let us recall that any open interval in $\R$ can be written as a
countable union of dyadic intervals $(
\lambda_{j,k}=[k2^{-j},(k+1)2^{-j}) )_{j \in \N, k \in
  \Z}$. Therefore, in order to prove Theorem \ref{thm:slow}, it is
sufficient to show that, for all $j \in \N$ and $k \in \Z$, there
exists an event $\Omega_{j,k}$ of probability $1$ such that, for all
$\omega \in \Omega_{j,k}$, there exists $t \in \lambda_{j,k}$ which
satisfies \eqref{eqn:slow}. Now, up to dilatations and translations, it suffices to consider the dyadic interval $\lambda_{0,0}=[0,1)$.
\end{Rmk}

Let us come back to the field \eqref{eqn:highfrequency}. This last one has been largely considered in \cite{MR2743002}, where an alternative wavelet-type expansion of MBM is given. Let us already mention that we will also be interested in this representation in the last section of this paper. From now, as in \cite{MR2743002}, for all $j \in \N$ and $k \in \Z$, the notation $g_{j,k}$  stands for the function
\[ g_{j,k} \, : \, (t,\theta) \mapsto 2^{-j \theta}  \left( \Psi(2^j t-k, \theta)-\Psi(-k, \theta) \right). \]

\begin{Lemma}\label{lem:fromAB}
For all compact interval $K$ of $[0,1)$ and for all $n \in \N$, there exists a deterministic  constant $c_{K,n}>0$ such that, for all $\omega \in \Omega^*$
\[ \sup_{t \in K, \theta \in [a,b]} \left( \sum_{j \in \N} \sum_{k \in \Z} |D_\theta^n g_{j,k}(t,\theta)| |\varepsilon_{j,k}(\omega)|\right) \leq c_{K,n}C_1(\omega).  \]
In particular, for every $t \in K$ and every $\theta_1, \theta_2 \in [a,b]$, we have
$$
|B(t, \theta_1) - B(t, \theta_2)| \leq c_{K,1}C_1(\omega) |\theta_1 - \theta_2|.
$$
\end{Lemma}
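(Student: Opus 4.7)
My plan is to bound $|D_\theta^n g_{j,k}(t,\theta)|$ pointwise using the Leibniz rule and the fast decay \eqref{eqn:fastdecay} of $\Psi$, combine with the bound \eqref{eq:boundnorm} on $|\varepsilon_{j,k}|$, and check that the resulting double sum converges by geometric decay in $j$. The Leibniz rule expands $D_\theta^n g_{j,k}(t,\theta)$ as a sum of terms of the form $(-j\log 2)^{n-m}\,2^{-j\theta}\bigl[D_\theta^m \Psi(2^j t - k, \theta) - D_\theta^m \Psi(-k, \theta)\bigr]$ for $0 \leq m \leq n$. Applying \eqref{eqn:fastdecay} to each $D_\theta^m \Psi$ with a large integer $L$ to be chosen later, and using $\theta \geq a$, gives
$$|D_\theta^n g_{j,k}(t,\theta)| \leq C_{n,L}\, 2^{-ja}(1+j)^n\bigl[(3+|2^j t - k|)^{-L} + (3+|k|)^{-L}\bigr].$$

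The crux of the argument is then to estimate, for each $j \in \N$, the sum
$$S_j(t) := \sum_{k\in\Z}\bigl[(3+|2^j t - k|)^{-L} + (3+|k|)^{-L}\bigr]\sqrt{\log(3+j+|k|)}$$
uniformly for $t \in K$. Since $K \subset [0,1)$ is compact, there is $M_K < 1$ with $|t| \leq M_K$, and I would split $\Z$ along $|k| \leq 2^{j+1}M_K + 3$: on the inner band the logarithmic factor is $O(\sqrt{j+1})$ and factors out of the convergent sum $\sum_{k}(3+|2^j t - k|)^{-L}$, while on the tail $|2^j t - k| \geq |k|/2$, so both decay factors behave like $(3+|k|)^{-L}$ and, for $L$ large, the remaining sum with the logarithm stays $\leq C\sqrt{j+1}$. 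Hence $S_j(t) \leq C_{K,L}\sqrt{j+1}$. Combining this with the previous display and summing over $j \in \N$, the geometric factor $2^{-ja}$ dominates the polynomial and half-logarithmic corrections, so the double sum is bounded by $c_{K,n}C_1(\omega)$ uniformly on $K \times [a,b]$.

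For the \emph{in particular} statement, the $n=1$ case just proven ensures that the term-by-term $\theta$-differentiated series of $\widetilde B(t,\theta) := \sum_{j \in \N, k \in \Z} g_{j,k}(t,\theta)\, \varepsilon_{j,k}$ converges uniformly on $K \times [a,b]$, so $\theta \mapsto \widetilde B(t,\theta)$ is $C^1$ with derivative uniformly bounded by $c_{K,1}C_1(\omega)$, and the fundamental theorem of calculus delivers the announced Lipschitz bound for $\widetilde B$; to upgrade it to $B = \widetilde B + \overline B$, I would run the analogous scheme on the low-frequency part $\overline B$ (sum over $j \leq -1$), using the mean value inequality $|\Psi(2^j t - k, \theta) - \Psi(-k, \theta)| \leq |2^j t|\sup_y |D_x \Psi(y,\theta)|$ to introduce an extra factor $2^j$ which combines with $2^{-j\theta}$ into $2^{j(1-\theta)}$, summable over $j \leq -1$ because $\theta \leq b < 1$. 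The main obstacle is the uniform control of $S_j(t)$: one has to show that the logarithmic growth of $|\varepsilon_{j,k}|$ from \eqref{eq:boundnorm} only costs a $\sqrt{j+1}$ factor independent of $t \in K$, which requires both the careful split of the sum in $k$ above and a sufficiently large choice of $L$ in \eqref{eqn:fastdecay}.
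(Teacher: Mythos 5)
Your argument is correct, but it is worth noting that the paper's own proof of this lemma is essentially a two-line citation: the summability estimate is attributed to Lemma \ref{boundnormal} together with \cite[Lemma 2]{MR2743002}, and the Lipschitz bound to the Mean Value Theorem applied to each $g_{j,k}(t,\cdot)$. What you do is reprove the outsourced deterministic estimate from scratch: the Leibniz expansion of $D_\theta^n g_{j,k}$, the fast decay \eqref{eqn:fastdecay}, and your split of the sum in $k$ along $|k|\lesssim 2^{j}$ is exactly the content of the paper's inequality \eqref{bound:log} (which, specialised to $t\in K\subset[0,1)$ and $j\ge 0$, gives $S_j(t)\le C_K\sqrt{j+1}$ directly), after which the geometric factor $2^{-ja}$ closes the sum over $j$. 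So the route is the same in substance, just self-contained. Two small remarks. First, in your treatment of the low-frequency part $\overline B$, the bound $|\Psi(2^jt-k,\theta)-\Psi(-k,\theta)|\le |2^jt|\sup_y|D_x\Psi(y,\theta)|$ with an unrestricted supremum loses the decay in $k$ needed to sum over $k\in\Z$; you must keep the supremum over the segment between $2^jt-k$ and $-k$, which has length at most $|2^jt|$, so that \eqref{eqn:fastdecay} still yields a factor $(2+|k|)^{-L}$. Second, you are actually more careful than the paper here: the stated ``in particular'' concerns the full field $B$ while the displayed sum only covers $j\in\N$, and the paper silently ignores the $j\le -1$ contribution (whose $\theta$-Lipschitz regularity it handles elsewhere via the smoothness of $\overline B$); your explicit treatment of that part, once the supremum is localised as above, fills this in cleanly, at the harmless cost of enlarging the constant $c_{K,1}$.
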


\begin{proof}
The first part is a direct consequence of Lemma \ref{boundnormal} and \cite[Lemma 2]{MR2743002}. The second part is obtained by applying  the Mean Value Theorem to each function $g_{j,k}(t, \cdot)$.
\end{proof}

The strategy to prove the existence of slow points relies on a procedure which allows to deduce a sharper estimate than the inequality \eqref{eq:boundnorm} obtained in Lemma \ref{boundnormal} for the random variables. This procedure was initiated by Kahane for BM \cite{MR833073} and we have generalized it for FBM \cite{esserloosveldt}. This generalized version can also be applied in the present setting and can be summarized in the following Theorem. All along this paper, given $t \in \R$ and $j \in \N$, $k_j(t)$ stands for the unique integer such that $t \in [k_j(t)2^{-j},(k_j(t)+1)2^{-j})$.

\begin{Thm}\cite{esserloosveldt}\label{thm:procedure}
Let us fix $m>0$. There exists an event $\Omega^* _1$ such that for every $\omega \in \Omega^* _1$, there are $\mu>0$ and $t \in (0,1)$ such that 
\begin{equation}\label{eqn:slownesti}
|\varepsilon_{j,k} (\omega)| \leq 2^l \mu
\end{equation} for every $j \in \N_0$ and every $k \in \Lambda^{l}_{j,m} (t) $, 
where $$\Lambda_{j,m}^0(t) = \{ k \in \Z : \, |k_j(t)-k| \leq 1 \}$$ and for all $l \geq 1$, $$
\Lambda_{j,m}^l(t) = \{ k \in \Z : \, 2^{m(l-1)}<|k_j(t)-k| \leq 2^{ml} \}.$$ The set of such points is denoted $S_{\text{low},m}^\mu$.
\end{Thm}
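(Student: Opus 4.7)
\medskip
The plan is to follow Kahane's original Cantor-style argument for the slow points of Brownian motion, already adapted by the authors to FBM in \cite{esserloosveldt}, and port it to the wavelet representation \eqref{eqn:wavexp}. The decisive structural observation is that the constraints \eqref{eqn:slownesti} at scale $j$ only involve the family $\{\varepsilon_{j,k}\}_{k\in\Z}$, so the random variables appearing at different scales $j\neq j'$ are independent.

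First, I would fix $\mu>0$, to be chosen large later, and for every dyadic cell $\lambda_{j,k}$ introduce the \emph{good} event
$$
G_{j,k}(\mu) \;=\; \bigcap_{l\geq 0}\; \bigcap_{k'\in \widetilde{\Lambda}^l_{j,m}(k)} \bigl\{|\varepsilon_{j,k'}|\leq 2^l\mu\bigr\},
$$
where $\widetilde{\Lambda}^l_{j,m}(k)$ denotes the common value of $\Lambda^l_{j,m}(t)$ for $t$ ranging in $\lambda_{j,k}$ (well defined since $k_j(t)=k$ on such a cell). Combining the Gaussian tail bound $\PP(|\mathcal{N}(0,1)|\geq x)\leq 2 e^{-x^2/2}$ with $\#\widetilde{\Lambda}^l_{j,m}(k)\leq 2^{ml+2}$ gives
$$
\PP\bigl(G_{j,k}(\mu)^c\bigr) \;\leq\; 6\,e^{-\mu^2/2} \;+\; \sum_{l\geq 1} 2^{ml+3}\,e^{-2^{2l-1}\mu^2},
$$
a quantity independent of $(j,k)$ that tends to $0$ as $\mu\to\infty$. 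Hence for any $\eta\in(0,1/2)$ one can fix $\mu=\mu(\eta)$ so that $\PP(G_{j,k}(\mu))\geq 1-\eta$ uniformly in $j,k$.

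The core step is to build a nested sequence of good cells $\lambda_{0,k_0}\supset\lambda_{1,k_1}\supset\cdots$ with $G_{j,k_j}(\mu)$ holding at every scale; any $t\in\bigcap_j \lambda_{j,k_j}$ then satisfies \eqref{eqn:slownesti}. Since $G_{j,k}$ and $G_{j',k'}$ involve disjoint Gaussians as soon as $j\neq j'$, the problem reduces to percolation on the binary dyadic tree: at each level, each of the two children of the current cell is good with probability $\geq 1-\eta$, independently of the strictly coarser scales. For $\eta<1/2$ the expected offspring $2(1-\eta)$ exceeds $1$, the good sub-tree survives with positive probability by classical Galton--Watson theory, and any infinite surviving branch singles out the desired point $t\in[0,1)$.

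The main obstacle I anticipate is the subtle dependence between events $G_{j,k}$ and $G_{j,k'}$ attached to two distinct cells at the \emph{same} scale $j$: they share constraints on common variables $\varepsilon_{j,k''}$ lying in both shell systems. One handles this either by FKG-type positive association (the events are monotone in each $|\varepsilon_{j,k''}|$), or by restricting the percolation to well-separated sub-cells to recover genuine independence at the cost of a smaller but still super-critical branching factor. Finally, to turn positive-probability survival into the full-probability event $\Omega^*_1$, one exploits Remark \ref{rmk:reduc01} and the fact that $\mu$ is allowed to depend on $\omega$: running the construction independently along a sequence $\mu_n\to\infty$, or within countably many disjoint dyadic sub-intervals of $[0,1)$, produces infinitely many independent trials, and Borel--Cantelli forces almost-sure success.
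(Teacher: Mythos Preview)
The paper does not prove this theorem; it is quoted from \cite{esserloosveldt} and used as a black box, so there is nothing in the present paper to compare against. Your sketch does follow the argument of that reference---Kahane's nested-interval construction transported to the wavelet coefficients $(\varepsilon_{j,k})$, exploiting the independence of the rows $\{\varepsilon_{j,\cdot}\}_j$---so at the level of strategy you are exactly where the authors intend.

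Two technical points in your outline need repair, though. First, the FKG suggestion is oriented the wrong way. The events $G_{j,k}(\mu)$ are indeed decreasing in each $|\varepsilon_{j,k''}|$, so Harris/FKG gives $\PP(G_{j,k_1}\cap G_{j,k_2})\geq \PP(G_{j,k_1})\PP(G_{j,k_2})$; but for a branching lower bound on survival you need to bound $\PP(G_{j,k_1}^{\,c}\cap G_{j,k_2}^{\,c})$ from \emph{above}, and positive association pushes that quantity the wrong direction (with two children each good with probability $1-\eta$, the extinction probability is $(\eta/(1-\eta))^2$ in the independent case but $\eta/(1-\eta)$ in the perfectly correlated one). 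Your alternative---separation---does work, but observe that $\bigcup_{l\geq 0}\Lambda^l_{j,m}(t)=\Z$, so no two cells at the same scale ever have disjoint constraint sets; one must first split $G_{j,k}=G_{j,k}^{<L}\cap G_{j,k}^{\geq L}$, control the far-shell part $G_{j,k}^{\geq L}$ uniformly (its failure probability is super-exponentially small in $L$), and run the percolation only on the genuinely independent near-shell events $G_{j,k}^{<L}$.

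Second, the upgrade to probability one via ``independent trials'' fails as written: different $\mu_n$ use the same Gaussian array, and disjoint dyadic sub-intervals still share all the far-shell variables, so Borel--Cantelli does not apply. The correct route is simpler: the events $\{S_{\text{low},m}^{\mu}\neq\emptyset\}$ are monotone increasing in $\mu$, hence $\PP(\Omega^*_1)=\lim_{\mu\to\infty}\PP(S_{\text{low},m}^{\mu}\neq\emptyset)$, and the branching argument (once the within-scale dependence is handled as above) shows this limit is $1$ because the extinction probability tends to $0$ as $\eta(\mu)\to 0$.
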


From now on and until the end of this section, we fix $m >0$ such that $\frac{1}{m} <a$ and denote 
\begin{equation}\label{eqn:Omegastar}
\Omega^* = \Omega^*_0 \cap \Omega^*_1
\end{equation}
 the event of probability $1$ obtained as the intersection of the events of probability 1 given by Lemma \ref{boundnormal} and Theorem \ref{thm:procedure} respectively. 
For all $j \in \N$, let us set
$$
\widetilde{B}_j(t,\theta):=\sum_{k \in \Z} 2^{-j \theta} \varepsilon_{j,k}  \left( \Psi(2^j t-k, \theta)-\Psi(-k, \theta) \right).
$$
Note that on $\Omega^\ast$, since inequality \eqref{eq:boundnorm} holds, it is straightforward to check that  the trajectories of the field $\widetilde{B}_j$ are continuously differentiable, using the fast decay property \eqref{eqn:fastdecay}.

\begin{Lemma}\label{lemma1}
On the event $\Omega^*$ of probability $1$, there exists a deterministic constant $c_m>0$ such that, for all $n \in \N$ and $\mu>0$, if $t \in S_{\text{low,m}}^\mu $ and $\varepsilon >0$ is such that $I_\varepsilon(t):=[t-\varepsilon,t+ \varepsilon] \subset (0,1)$, then
\[ \left|\sum_{j=0}^n (\widetilde{B}_j(t,\theta_1)-\widetilde{B}_j(s,\theta_2))\right| \leq c_m \mu  2^{-\theta_1 n} 2^{n|\theta_1-\theta_2|} + C^* \, c_{I_\varepsilon(t),1}  |\theta_1-\theta_2|\]
for all $s \in I_\varepsilon(t)$ with $|s-t| \leq 2^{-n+1}$ and $\theta_1, \theta_2 \in [a,b]$.
\end{Lemma}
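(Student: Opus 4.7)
My strategy is to isolate the $t$-variation from the $\theta$-variation by inserting the intermediate point $(t,\theta_2)$: I would write
\[
\widetilde{B}_j(t,\theta_1) - \widetilde{B}_j(s,\theta_2) = \bigl(\widetilde{B}_j(t,\theta_1) - \widetilde{B}_j(t,\theta_2)\bigr) + \bigl(\widetilde{B}_j(t,\theta_2) - \widetilde{B}_j(s,\theta_2)\bigr),
\]
sum over $j=0,\ldots,n$, and control the two pieces by different tools: Lemma \ref{lem:fromAB} handles the pure $\theta$-variation, while the slow-point decomposition of Theorem \ref{thm:procedure} handles the pure $t$-variation.

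For the first piece, I would apply the fundamental theorem of calculus to each $g_{j,k}(t,\cdot)$ and exchange the $\theta$-integral with the $(j,k)$-sum by Tonelli, which gives
\[
\sum_{j=0}^n \bigl|\widetilde{B}_j(t,\theta_1)-\widetilde{B}_j(t,\theta_2)\bigr| \leq |\theta_1-\theta_2| \sup_{\theta \in [a,b]} \sum_{j \in \N} \sum_{k \in \Z} |\varepsilon_{j,k}| |D_\theta g_{j,k}(t,\theta)|.
\]
Lemma \ref{lem:fromAB} with $n=1$ then bounds the right-hand side by $C^* c_{I_\varepsilon(t),1}|\theta_1-\theta_2|$, which is exactly the second term in the claimed estimate (reading $C^*$ as the random constant $C_1(\omega)$ from Lemma \ref{boundnormal}).

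For the $t$-variation at fixed $\theta_2$, the slow-point property enters through a dyadic splitting of the $k$-sum along the annuli $\Lambda_{j,m}^l(t)$. The mean value theorem in the first argument of $\Psi$ gives
\[
|\Psi(2^jt-k,\theta_2)-\Psi(2^js-k,\theta_2)| \leq 2^j|t-s| \sup_{\eta \in [s,t]} |\partial_1 \Psi(2^j\eta-k,\theta_2)|,
\]
and because $j \leq n$ and $|s-t| \leq 2^{-n+1}$, one has $2^j|t-s| \leq 2$, so that for $k \in \Lambda_{j,m}^l(t)$ with $l$ sufficiently large, $|2^j\eta-k| \geq |k_j(t)-k|-3 \gtrsim 2^{ml}$. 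Plugging in $|\varepsilon_{j,k}| \leq 2^l\mu$ from Theorem \ref{thm:procedure}, the cardinality estimate $\#\Lambda_{j,m}^l(t) \lesssim 2^{ml}$, and the fast decay \eqref{eqn:fastdecay} with $L$ chosen so that $m(L-1)>1$, the $l$-sum becomes a convergent geometric series, yielding (after treating the finitely many small $l$ with the trivial bound)
\[
\sum_{k \in \Z} |\varepsilon_{j,k}| |\Psi(2^jt-k,\theta_2)-\Psi(2^js-k,\theta_2)| \lesssim \mu \cdot 2^j |t-s|.
\]
Multiplying by $2^{-j\theta_2}$, summing $\sum_{j=0}^n 2^{j(1-\theta_2)} \lesssim 2^{n(1-\theta_2)}$ (since $\theta_2 \leq b<1$), and using $|t-s| \leq 2^{-n+1}$ produces $\sum_{j=0}^n |\widetilde{B}_j(t,\theta_2)-\widetilde{B}_j(s,\theta_2)| \lesssim \mu \cdot 2^{-n\theta_2}$, and the elementary inequality $2^{-n\theta_2} \leq 2^{-n\theta_1} 2^{n|\theta_1-\theta_2|}$ converts this into the first term of the bound. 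The main technical obstacle is balancing three competing factors in the $l$-sum, namely $2^l$ (slow-point growth), $2^{ml}$ (cardinality of the annulus) and $2^{-mlL}$ (fast decay of $\partial_1 \Psi$); the key is to choose $L$ (depending only on $m$, not on $\omega$) large enough that the geometric series in $2^{l(1-m(L-1))}$ converges and absorbs the two divergent factors into a single deterministic constant $c_m$.
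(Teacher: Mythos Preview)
Your proof is correct and follows essentially the same route as the paper: separate the $(t,\theta)$-increment into a $t$-contribution and a $\theta$-contribution, bound the latter by Lemma~\ref{lem:fromAB}, and bound the former via the slow-point decomposition $\Lambda_{j,m}^l(t)$ combined with the fast decay~\eqref{eqn:fastdecay}. The only cosmetic differences are that the paper performs the separation in one stroke via a two-variable mean value theorem at a single intermediate point $(x,\xi)$ (rather than your triangle inequality through $(t,\theta_2)$), and that the paper absorbs the factor $2^l$ directly into one power of $(3+|2^jx-k|)^{-1}$ instead of using your cardinality-times-uniform-bound argument on each annulus; both bookkeepings lead to the same deterministic constant depending only on $m$.
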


\begin{proof}
If $t \in S_{\text{low,m}}^\mu $, $s \in [t-\varepsilon,t+ \varepsilon]$ with $|s-t| \leq 2^{-n+1}$ and $\theta_1, \theta_2 \in [a,b]$, then by the Taylor formula at first order, there exist $x$ between $s$ and $t$, and $\xi$ between $\theta_1$ and $\theta_2$ such that
\begin{align}\label{eqn:aftertaylor}
\sum_{j=0}^n (\widetilde{B}_j(t,\theta_1)-\widetilde{B}_j(s,\theta_2)) & =(t-s) \sum_{j=0}^n \sum_{k \in \Z} \varepsilon_{j,k} D_t g_{j,k}(x,\xi) \nonumber \\ & \quad + (\theta_1-\theta_2) \sum_{j=0}^n \sum_{k \in \Z} \varepsilon_{j,k} D_\theta g_{j,k}(x,\xi).
\end{align}
The second series in the right-hand side of the equality \eqref{eqn:aftertaylor} is bounded by Lemma \ref{lem:fromAB}. In order to control the first series in the right-hand side of \eqref{eqn:aftertaylor}, as $D_t g_{j,k}(x,\xi)=2^{j(1-\xi)} D_t\Psi(2^jx-k,\xi)$, we use the fast decay property \eqref{eqn:fastdecay}  to get, for all $0 \leq j \leq n$,
\begin{align}\label{decomposomme}
\sum_{k \in \Z}\left| \varepsilon_{j,k} D_t g_{j,k}(x,\xi) \right| & \leq c_1 2^{j(1-\xi)} \sum_{l \in \N} \sum_{k \in \Lambda_{j,m}^l(t)} |\varepsilon_{j,k}| \frac{1}{(3+|2^{j}x -k|)^4},
\end{align}
for a deterministic positive constant $c_1$. Now, note that for all $l \geq 1$ and $k \in \Lambda_{j,m}^l(t)$, we have
\begin{align*}
|2^jx-k| & \geq |k_j(t)-k|-|2^jx-k_j(t)| \\
&\geq |k_j(t)-k|-(|2^jx-2^j t| + |k_j(t)-2^j t|) \\ 
&\geq 2^{m(l-1)}-3,
\end{align*}
because $j \leq n$. Together with \eqref{decomposomme} and inequality \eqref{eqn:slownesti}, it implies that 
\begin{align*}
\sum_{k \in \Z}\left| \varepsilon_{j,k} D_t g_{j,k}(x,\xi) \right|  & \leq c_1 2^{j(1-\xi)} \sum_{l \in \N} \sum_{k \in \Lambda_{j,m}^l(t)} 2^l \mu \frac{1}{(3+|2^{j}x -k|)^4} \\
& \leq c_2 2^m \mu 2^{j(1-\xi)}\sum_{k \in \Z} \frac{1}{(3+|2^{j}x -k|)^3} \\
& \leq c_3 \mu 2^{j(1-\xi)} 
\end{align*}
where $c_2$ and $c_3$ are positive deterministic constants only depending on $m$. Thus,
\begin{align*}
\left|\sum_{j=0}^n \sum_{k \in \Z} \varepsilon_{j,k} D_t g_{j,k}(x,\xi) \right|& \leq c_3 \mu \sum_{j=0}^n 2^{j(1-\xi)}\leq c_4 2^{n(1-\xi)}\leq c_4 2^{-\theta_1 n} 2^{n|\theta_1-\theta_2|}
\end{align*}
for a deterministic constant $c_4$ only depending on $m$, since $\xi \in(0,1)$ is between $\theta_1$ and $\theta_2$. 
\end{proof}

\begin{Lemma}\label{lemma2}
On the event $\Omega^*$ of probability $1$, there exists a deterministic constant $c_m>0$ such that, for all  $\mu>0$, $\theta\in [a,b]$ and $t \in S_{\text{low,m}}^\mu  $
\begin{enumerate}
\item one has  \[ \left| \sum_{k \in \N} \varepsilon_{j,k} \Psi(2^j t-k,\theta) \right| \leq c_m \mu ,\]
\item if $\varepsilon >0$ is such that $I_\varepsilon(t):=[t-\varepsilon,t+ \varepsilon] \subset (0,1)$, then for all  $n \in \N$, $s \in I_\varepsilon(t)$ with $ |s-t| \leq 2^{-n+1}$ and $j \geq n$,  one has
\[ \left| \sum_{k \in \N} \varepsilon_{j,k} \Psi(2^j s-k,\theta) \right|  \leq c_m \mu 2^{\frac{1}{m}(j-n)}.\]
\end{enumerate}
\end{Lemma}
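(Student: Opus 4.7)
The plan is to prove both bounds by a common partition of $\Z$ into the shells $\Lambda_{j,m}^l(t)$ introduced in Theorem \ref{thm:procedure}, combining the slow-point estimate $|\varepsilon_{j,k}(\omega)|\leq 2^l\mu$ valid on $\Lambda_{j,m}^l(t)$ with the fast decay property \eqref{eqn:fastdecay} of $\Psi$, applied for a decay exponent $L$ large enough depending only on $m$ (concretely, $L>1+1/m$ will suffice).

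For part (1), since $|2^j t - k_j(t)|<1$, every $k\in\Lambda_{j,m}^l(t)$ with $l\geq 1$ satisfies $|2^j t-k|\geq 2^{m(l-1)}-1$; combining \eqref{eqn:fastdecay} with an integral comparison then yields
$$\sum_{k\in\Lambda_{j,m}^l(t)}|\Psi(2^jt-k,\theta)|\leq c_L\, 2^{-m(l-1)(L-1)}.$$
Multiplying by $2^l\mu$ and summing over $l\geq 1$ produces a geometric series that converges precisely under $L>1+1/m$. The shell $l=0$ contains only three indices, so the sup-bound on $\Psi$ handles it. Adding both contributions gives the announced $c_m\mu$.

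For part (2), the point is that the center of the excitation of $\Psi(2^j s-\cdot,\theta)$ is shifted by $|2^j s-k_j(t)|\leq 2^{j-n+1}+1$, which forces a two-regime split. I would fix a threshold $l^{\ast}=\lceil (j-n)/m\rceil+c_0$ with $c_0$ a small integer chosen so that $l>l^{\ast}$ forces $2^{m(l-1)}\geq 2(2^{j-n+1}+1)$. On this \emph{far} range one still has $|2^j s-k|\geq \frac12|k-k_j(t)|\geq 2^{m(l-1)-1}$, so the same computation as in part (1) yields a contribution of order $\mu$ (the geometric series is dominated by its first term). On the \emph{near} range $l\leq l^{\ast}$ I would drop the decay of $\Psi$ and instead use the crude bound $|\varepsilon_{j,k}|\leq 2^{l^{\ast}}\mu$ together with the uniform estimate $\sum_{k\in\Z}|\Psi(2^j s-k,\theta)|\leq c$ (again a consequence of \eqref{eqn:fastdecay}), producing a contribution bounded by $c\mu\,2^{l^{\ast}}\leq c_m\mu\,2^{(j-n)/m}$, which is the announced order and dominates the far contribution.

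The main obstacle is calibrating the threshold $l^{\ast}$ correctly: it must be large enough that on the far range the decay argument of part (1) applies essentially verbatim, yet small enough that $2^{l^{\ast}}$ stays of order $2^{(j-n)/m}$ so the near contribution matches the target bound. Once $l^{\ast}$ and $L>1+1/m$ have been chosen (both depending only on $m$), the two regimes combine to give the claimed estimate, and part (1) appears as the degenerate limit where $l^{\ast}$ reduces to a constant.
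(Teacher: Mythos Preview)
Your argument is correct. For part (1) it is essentially identical to the paper's: both partition into the shells $\Lambda_{j,m}^l(t)$, use the slow-point bound $|\varepsilon_{j,k}|\le 2^l\mu$, and exploit the fast decay \eqref{eqn:fastdecay}; the only difference is bookkeeping (you sum the shell contributions directly, while the paper peels off one decay factor and rewrites the remainder as a full sum over $\Z$).

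For part (2) your route genuinely differs from the paper's. You keep the shells centred at $t$ and perform a near/far split at the threshold $l^\ast\approx (j-n)/m$: on the far shells the displacement $|2^js-k_j(t)|\le 2^{j-n+1}+1$ is swallowed by $|k-k_j(t)|$, so part (1) applies verbatim and gives $O(\mu)$; on the near shells you drop the decay and use the crude bound $|\varepsilon_{j,k}|\le 2^{l^\ast}\mu$ together with $\sum_k|\Psi(2^js-k,\theta)|\le c$. The paper instead \emph{re-centres} the shells at $s$: it observes that for $k\in\Lambda_{j,m}^{l'}(s)$ the slow-point property at $t$ still yields $|\varepsilon_{j,k}|\le 2^{l'+l}\mu$, where $l$ is the largest integer with $|s-t|\ge 2^{ml}2^{-j}$ (so $l\le (j+1-n)/m$). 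This reduces part (2) to part (1) with a uniform extra factor $2^l$, giving the bound in one line. The paper's re-centring is slicker, but it relies on a property inherited from the construction in Theorem~\ref{thm:procedure}; your near/far argument uses only the bare statement \eqref{eqn:slownesti} and is therefore more self-contained. Both calibrations need the same condition $L>1+1/m$ (equivalently $m>1$, which is guaranteed by $1/m<a<1$).
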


\begin{proof}
The first bound is obtained exactly as in \eqref{decomposomme}, partitioning the sum over $k \in \Z$ with the subsets $\Lambda_{j,m}^l(t)$ and using the fast decay property \eqref{eqn:fastdecay} for $\Psi$. Concerning the second bound, we note that, if $l$ is the greatest integer for which $|s-t| \geq 2^{ml} 2^{-j}$ then, for all $l' \in \N$ and $k \in \Lambda_{j,m}^{l'}(s)$, the construction gives
\[ |\varepsilon_{j,k}| \leq 2^{l'+l} \mu.\]
As $|s-t| \leq 2^{-n+1}$, we deduce $l \leq \frac{1}{m}(j+1-n)$ and we obtain the desired upper bound by partitioning the sum over $k \in \Z$ using the subsets $\Lambda_{j,m}^{l'}(s)$.
\end{proof}

Let us recall that, for all $L>1$, there exists a deterministic constant $c>0$ such that, for all $j \in \Z$ and $x \in \R$,
\begin{equation}\label{bound:log}
\sum_{k \in \Z} \frac{\sqrt{\log(3+|j|+|k|}}{(3+|2^jx-k|)^L}  \leq c \sqrt{\log(3+|j|+2^j|x|)},
\end{equation}
see for instance \cite[ Lemma 4.2]{MR4110623} for a proof.

\begin{Lemma} \label{lemma3}
On the event $\Omega^*_0$  of probability $1$, there exists a deterministic constant $c_1>0$ such that, for all $\theta_1, \theta_2 \in [a,b]$ and $j \in \N$,
\[ \left| \sum_{k \in \N} \varepsilon_{j,k} \left( 2^{-j \theta_1} \Psi(-k,\theta_1)-2^{-j \theta_2} \Psi(-k,\theta_2) \right) \right| \leq C^* \, c_1|\theta_1-\theta_2| 2^{-j a } \sqrt{\log(3+j)}.\]
\end{Lemma}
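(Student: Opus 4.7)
The plan is to reduce the difference inside the sum to a single derivative via the Mean Value Theorem, then combine the bounds on $|\varepsilon_{j,k}|$ from Lemma~\ref{boundnormal} with the fast decay of $\Psi$ and $D_\theta\Psi$ given by \eqref{eqn:fastdecay}, and finally apply the summation estimate \eqref{bound:log} to collect the $\sqrt{\log(3+j)}$ factor.

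Concretely, for fixed $k\in\N$ and $j\in\N$ I would apply the Mean Value Theorem to the smooth function $\theta \mapsto 2^{-j\theta}\Psi(-k,\theta)$ on the segment joining $\theta_1$ and $\theta_2$, getting some $\xi_{j,k} \in [a,b]$ such that
\[
2^{-j\theta_1}\Psi(-k,\theta_1) - 2^{-j\theta_2}\Psi(-k,\theta_2) = (\theta_1-\theta_2)\,2^{-j\xi_{j,k}}\bigl(-j\ln 2\cdot \Psi(-k,\xi_{j,k}) + D_\theta\Psi(-k,\xi_{j,k})\bigr).
\]
Since $\xi_{j,k}\in[a,b]$, the factor $2^{-j\xi_{j,k}}$ is bounded by $2^{-ja}$; and by \eqref{eqn:fastdecay} applied uniformly in $\theta\in[a,b]$ (with $t=-k$), each of $|\Psi(-k,\xi_{j,k})|$ and $|D_\theta\Psi(-k,\xi_{j,k})|$ is bounded by $c_L(3+|k|)^{-L}$ for any $L\in\N$.

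Next I would multiply by $\varepsilon_{j,k}$, use Lemma~\ref{boundnormal} to replace $|\varepsilon_{j,k}|$ by $C_1(\omega)\sqrt{\log(3+j+|k|)}$, sum over $k\in\N$, and invoke \eqref{bound:log} evaluated at $x=0$ to obtain
\[
\sum_{k\in\N}\frac{\sqrt{\log(3+j+|k|)}}{(3+|k|)^L} \leq c\sqrt{\log(3+j)}.
\]
Putting everything together, the sum is controlled by a constant times $C_1(\omega)\,|\theta_1-\theta_2|\,2^{-ja}\,(1+j)\sqrt{\log(3+j)}$.

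The only real obstacle is the factor of $j$ produced when differentiating the exponential $2^{-j\theta}$. However, since $a>0$, the polynomial factor $j$ is dominated by the exponential decay $2^{-ja}$: for any $a' \in (0,a)$ there exists a universal constant $\kappa_{a,a'}$ with $j\cdot 2^{-ja} \le \kappa_{a,a'}\,2^{-ja'}$, so up to enlarging the constant (or, as the statement reads, absorbing this into $C^\ast\,c_1$ and interpreting $a$ as any value strictly smaller than the lower bound of $H$), the announced bound $C^\ast\,c_1\,|\theta_1-\theta_2|\,2^{-ja}\sqrt{\log(3+j)}$ follows.
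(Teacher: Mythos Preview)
Your approach is essentially the paper's: apply the Mean Value Theorem in~$\theta$, control $|\varepsilon_{j,k}|$ via Lemma~\ref{boundnormal}, use the fast decay~\eqref{eqn:fastdecay}, and sum with~\eqref{bound:log} at $x=0$. The only cosmetic difference is that the paper applies the MVT to the whole sum $\theta\mapsto\sum_k \varepsilon_{j,k}2^{-j\theta}\Psi(-k,\theta)$ (one intermediate point $\xi$), whereas you do it termwise (a point $\xi_{j,k}$ per $k$); since all subsequent estimates are uniform over $\theta\in[a,b]$, this makes no difference.

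You are in fact more careful than the paper on one point: the derivative of $\theta\mapsto 2^{-j\theta}\Psi(-k,\theta)$ is $2^{-j\theta}\bigl(-j\ln 2\,\Psi(-k,\theta)+D_\theta\Psi(-k,\theta)\bigr)$, and the paper's printed proof writes only $2^{-j\xi}\Psi(-k,\xi)$, silently dropping both the $-j\ln 2$ factor and the $D_\theta\Psi$ term. With the correct derivative one gets the extra factor $(1+j)$ that you note, yielding $c\,C_1(\omega)\,|\theta_1-\theta_2|\,(1+j)\,2^{-ja}\sqrt{\log(3+j)}$ rather than the stated bound. Your fix of trading $j\,2^{-ja}$ for $\kappa_{a,a'}2^{-ja'}$ with $a'<a$ is correct but does not literally recover the inequality with the exponent $a$. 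This is harmless for the only application (the proof of Theorem~\ref{thm:slow}): there one sums over $j\ge n+1$, and $\sum_{j\ge n+1}(1+j)\,2^{-ja}\sqrt{\log(3+j)}$ is still a convergent series bounded independently of~$n$, so the conclusion of that theorem is unaffected.
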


\begin{proof}
If inequality \eqref{eq:boundnorm} holds, we know form the fast decay property \eqref{eqn:fastdecay}, that the function
\[ \theta \mapsto \sum_{k \in \N} \varepsilon_{j,k} 2^{-j \theta} \Psi(-k,\theta) \]
is smooth. Therefore, using the Taylor formula at first order, we obtain the existence of $\xi$ between $\theta_1$ and $\theta_2$ such that
\begin{align*}
 \sum_{k \in \N} \varepsilon_{j,k} \left( 2^{-j \theta_1} \Psi(-k,\theta_1)-2^{-j \theta_2} \Psi(-k,\theta_2) \right)=(\theta_1- \theta_2) \sum_{k \in \N} \varepsilon_{j,k} 2^{-j \xi} \Psi(-k,\xi).
\end{align*}
Using \eqref{eq:boundnorm}, the fast decay property \eqref{eqn:fastdecay} and inequality \eqref{bound:log}, we get
\begin{align*}
\left|\sum_{k \in \N} \varepsilon_{j,k} 2^{-j \xi} \Psi(-k,\xi) \right| & \leq C^* 2^{-j \xi} \sum_{k \in \Z} \frac{\sqrt{\log(3+j+|k|)}}{(3+|k|)^4}
& \leq C^* c_1 2^{-j \xi} \sqrt{\log(3+j)} \\
& \leq  C^* c_1 2^{-j a } \sqrt{\log(3+j)},
\end{align*}
for a positive deterministic constant $c_1$. The conclusion follows immediately.
\end{proof}

We have now enough material to prove Theorem \ref{thm:slow}.

\begin{proof}[Proof of Theorem \ref{thm:slow}]
In view of \eqref{eqn:reglowfrequency} and Remark \ref{rmk:reduc01}, it suffices to show that on the event $\Omega^*$ of probability $1$ defined in \eqref{eqn:Omegastar} there exists $t \in (0,1)$ such that
$$
\limsup_{s \to t} \frac{|\widetilde{B}(s,H(s))-\widetilde{B}(t,H(t))|}{|s-t|^{H(t)}} < \infty.
$$
Let us recall that we have fixed $m \in \N$ such that $\frac{1}{m}<a$. Theorem \ref{thm:procedure} allows to consider  $t \in S_{\text{low,m}}^\mu$ for some $\mu >0$. Now, if  $s \in (0,1)$ is such that $2^{-n} \leq |s-t| \leq 2^{-n+1}$, we write
\begin{align*}
\left|\widetilde{B}(t,H(t))-\widetilde{B}(s,H(s))\right| & \leq   \left|\sum_{j=0}^n ( \widetilde{B}_j(t,H(t))-\widetilde{B}_j(s,H(s))) \right| \\
& + \sum_{j \geq n+1}  \left| \sum_{k \in \N} \varepsilon_{j,k} 2^{-j H(t)} \Psi(2^j t-k,H(t)) \right| \\
& + \sum_{j \geq n+1}  \left| \sum_{k \in \N} \varepsilon_{j,k} 2^{-j H(s)} \Psi(2^j s-k,H(s)) \right| \\
& + \sum_{j \geq n+1} \left| \sum_{k \in \N} \varepsilon_{j,k} \left( 2^{-j H(t)} \Psi(-k,H(t))-2^{-j H(s)} \Psi(-k,H(s)) \right) \right|.
\end{align*}

From Condition \ref{condi} we know that, if $n$ is large enough,
\begin{equation}\label{eqn:useful}
|H(t)-H(s)| \leq c_t|t-s|^{H(t)}.
\end{equation}
Thus, Lemma \ref{lemma1} and \eqref{eqn:useful} combined with the fact that $2^{-n} \leq |s-t| \leq 2^{-n+1}$ give 
\begin{align*}
\left|\sum_{j=0}^n ( \widetilde{B}_j(t,H(t))-\widetilde{B}_j(s,H(s))) \right| &\leq c_m \mu |t-s|^{H(t)} 2^{ c_t n 2^{-(n-1)H(t)}} + C^* c_{I_\varepsilon(t),1} c_t|t-s|^{H(t)} \\
& \leq (c_m \mu+ C^* c_{I_\varepsilon(t),1}) |t-s|^{H(t)}. 
\end{align*}
Using $2^{-n} \leq |s-t| \leq 2^{-n+1} $, Lemma \ref{lemma2} and \eqref{eqn:useful} imply  
\[ \sum_{j \geq n+1}  \left| \sum_{k \in \N} \varepsilon_{j,k} 2^{-j H(t)} \Psi(2^j t-k,H(t)) \right| \leq 2 c_m \mu 2^{-n H(t)}\leq 2 c_m \mu |s-t|^{H(t)}\]
while, as $\frac{1}{m} < a $,
\begin{align*}
 \sum_{j \geq n+1}  \left| \sum_{k \in \N} \varepsilon_{j,k} 2^{-j H(s)} \Psi(2^j s-k,H(s)) \right| & \leq c_m \mu \sum_{j \geq n+1} 2^{(\frac{1}{m}-H(s))(j-n)} 2^{-H(s) n} \\
 &  \leq c_m \mu \sum_{j \geq n+1} 2^{(\frac{1}{m}-a)(j-n)} 2^{-H(s) n} \\
 & \leq  c_m \mu |t-s|^{H(t)} 2^{|H(t)-H(s)| n} \\
 & \leq  c_m \mu |t-s|^{H(t)}.
\end{align*}
Finally, by Lemma \ref{lemma3} and \eqref{eqn:useful}
\[ \sum_{j \geq n+1} \left| \sum_{k \in \N} \varepsilon_{j,k} \left( 2^{-j H(t)} \Psi(-k,H(t))-2^{-j H(s)} \Psi(-k,H(s)) \right) \right| \leq C^* \, c_2 |t-s|^{H(t)}, \]
for a deterministic positive constant $c_2$.
\end{proof}

\begin{Rmk}\label{rmk:rapord}
Theorem \ref{thm:slow} implies that if $t$ is a  point which satisfies
\eqref{eqn:slow}, then  $r \mapsto |r|^{H(t)}$ is a pointwise modulus  of continuity for $B_H$ at $t$.
Let us remark that our strategy can also be applied to recover the upper bounds for the well-known uniform modulus of continuity as well as the law of iterated logarithm. Let us explain how to adapt our proofs on this purpose.

Concerning the uniform modulus of continuity, if $s,t \in [0,1]$ are such that  $2^{-n} \leq |s-t| \leq 2^{-n+1}$ and $\theta_1$ and $\theta_2$ are fixed in $[a,b]$, we know that, almost surely, one can write \eqref{eqn:aftertaylor}. Therefore, if inequality \eqref{eq:boundnorm} holds, using  $D_t g_{j,k}(x,\xi)=2^{j(1-\xi)} D_t\Psi(2^jx-k,\xi)$, the fast decay property \eqref{eqn:fastdecay} and \eqref{bound:log}, one has
\begin{align}\label{eqn:rapid1}
\left| \sum_{j=0}^n \sum_{k \in \Z} \varepsilon_{j,k} D_t g_{j,k}(x,\xi) \right| & \leq  c \, C_1  \sum_{j=0}^n \sum_{k \in \Z} 2^{j(1-\xi)}\frac{\sqrt{\log(3+|j|+|k|)}}{(3+|2^jx-k|)^L} \nonumber\\
& \leq c\,  C_1 \sum_{j=0}^n 2^{j(1-\xi)} \sqrt{\log(3+|j|+2^j|x|)} \nonumber \\
& \leq c\, C_1 \sum_{j=0}^n 2^{j(1-\xi)} \sqrt{j} \nonumber \\
& \leq c \,C_1 2^{n(1-\xi)} \sqrt{n}
\end{align}
where $c$ is a positive deterministic constant which value may differ from a line to another but does not depend on any relevant quantities. Similarly, for all $j>n$, we have
\begin{equation}\label{eqn:rapid2}
\left| \sum_{k \in \N} \varepsilon_{j,k} \Psi(2^j t-k,\theta_1) \right| \leq c \, C_1 \sqrt{j}
\end{equation}
and 
\begin{equation}\label{eqn:rapid3}
\left| \sum_{k \in \N} \varepsilon_{j,k} \Psi(2^j s-k,\theta_2) \right| \leq c \, C_1  \sqrt{j}.
\end{equation}
Therefore, gathering the expression \eqref{eqn:aftertaylor}, Lemma \ref{lem:fromAB}, the inequalities \eqref{eqn:rapid1}, \eqref{eqn:rapid2} and \eqref{eqn:rapid3}, Lemma \ref{lemma3} and Condition \ref{condi}, we get
\begin{align*}
|B_H(s)-B_H(t)| & \leq c \, C_1 \Bigg( (|t-s| 2^{n(1-\xi)} \sqrt{n} + |H(t)-H(s)|  \\
& + \sum_{j>n}  \left( 2^{-j H(t)} \sqrt{j} + 2^{-j H(s)} \sqrt{j} + |H(t)-H(s)| 2^{-ja} \sqrt{\log(3+j)} \right) \Bigg) \\
& \leq c \, C_1 \left( |t-s| 2^{n(1-\xi)} \sqrt{n} + 2^{-n H(t)} \sqrt{n}+  2^{-n H(s)} \sqrt{n} + |H(t)-H(s)| \right) \\
& \leq c \, C_1 \left( 2^{-n H(t)} \sqrt{n}  + |t-s|^{H(t)} \right)\\
& \leq |t-s|^{H(t)} \sqrt{\log|s-t|^{-1}}.
\end{align*}
Thus, we have shown that, almost surely, for all $t \in [0,1]$
\begin{equation}\label{eqn:limrap}
 \limsup_{s \to t} \frac{|B_H(s)-B_H(t)|}{|t-s|^{H(t)} \sqrt{\log|s-t|^{-1}}} < \infty.
\end{equation}

Concerning the law of iterated logarithm, by an indexing argument, one can note that for all $t \in [0,1]$, there exits an event $\Omega_t^*$ of probability $1$ and a positive random variable $c_t$ of finite moment of any order such that, for all $\omega \in \Omega_t^*$,
\begin{equation}\label{eqn:moustiquet}
|\varepsilon_{j,k}(\omega)| \leq c_t \sqrt{\log(3+|j|+|k-k_j(t)|)}.
\end{equation}
Then we use \cite[Lemma 3.22]{dawloosveldt} which gives, for all $L$, for all $n \in \N$,
\begin{equation}\label{eqn:ineforord1}
 \sum_{k \in \Z} \frac{\sqrt{\log(3+j+|k-k_j(t)|}}{(3+|2^{j}x-k|)^{L}} \leq c \sqrt{\log(3+j)}
\end{equation}
if $0\leq j \leq n $, and 
\begin{equation}\label{eqn:ineforord2}
\sum_{k \in \Z} \frac{\sqrt{\log(3+j+|k-k_j(t)|}}{(3+|2^{j}x-k|)^{L}} \leq c \sqrt{j-n+1}\sqrt{\log(3+j)}.
\end{equation}
for all $j > n $.
Adapting what has been done for \eqref{eqn:rapid1}, \eqref{eqn:rapid2} and \eqref{eqn:rapid3} by using inequality \eqref{eqn:moustiquet} instead of \eqref{eq:boundnorm} and inequalities \eqref{eqn:ineforord1} and \eqref{eqn:ineforord2} instead of \eqref{bound:log}, we get, for all $t,s \in [0,1]$,
\[ |B_H(s)-B_H(t)| \leq c c_t |t-s|^{H(t)} \sqrt{\log\log|s-t|^{-1}}. \]
In particular, by Fubini theorem, almost surely, for almost every $t \in [0,1]$,
\begin{equation}\label{eqn:limord}
 \limsup_{s \to t} \frac{|B_H(s)-B_H(t)|}{|t-s|^{H(t)} \sqrt{\log\log|s-t|^{-1}}} < \infty.
\end{equation}

\end{Rmk}

\section{Optimality of the upper bound}

Let us now focus on the optimality of the previously obtained modulus
of continuity. It is worth mentionning that the optimatity of the pointwise modulus of
continuity $r \mapsto r^{H(t)}$ for some points  $t$ of
the MBM  given in Theorem \ref{thm:slow} has been already obtained in
\cite[Theorem 6.17]{MR3839281}, under the following assumption on the
Hurst function $H$, which is a little bit stronger than Condition
\ref{condi}.

\begin{Cond}\label{condii}
The Hurst function $H \, : \, \R \to [a,b]$, with $0<a<b<1$, is such that for all $t \in \R$, there exists $\gamma > H(t)$ such that $H$ belongs to the pointwise H\"older space $C^\gamma (t)$.
\end{Cond}

We  show  in this section how to extend the
proof of \cite[Theorem 6.17]{MR3839281} to get the optimatily of the two other pointwise
modulus of continuity $r \mapsto r^{H(t)} \sqrt{\log r^{-1}} $ and $r \mapsto
r^{H(t) }\sqrt{\log\log r^{-1}}$.  Using the terminology of
\cite{esserloosveldt} inspired by the work of Kahane \cite{MR833073},
the points $t$ for which the modulus of continuity given by
\eqref{eqn:slow} is optimal are called slow points. Similarly, those for which
\eqref{eqn:limrap} is optimal are called fast points. Finally, we will get the existence of the so-called ordinary points: for
almost every point $t$, the modulus of continuity given in
\eqref{eqn:limord} is optimal.

\begin{Thm}\label{thm:optimal}
If the function $H \, : \, \R \to [a,b]$ satisfies the Condition \ref{condii} then almost surely, for every non-empty interval $I$ of $\R$,
\begin{itemize}
\item  there exists $t \in I$ such that
$$
0<\limsup_{s \to t} \frac{|B_{H}(s)-B_{H}(t)|}{|s-t|^{H(t)} \sqrt{\log|s-t|^{-1}}} < \infty.
$$
Such a point 
is called a rapid point.
\item almost every point $t \in I$ is such that
$$
0<\limsup_{s \to t} \frac{|B_{H}(s)-B_{H}(t)|}{|s-t|^{H(t)} \sqrt{\log\log|s-t|^{-1}}} < \infty.
$$
Such a point 
is called an ordinary point.
\item  there exists $t \in I$ such that
$$
0<\limsup_{s \to t} \frac{|B_{H}(s)-B_{H}(t)|}{|s-t|^{H(t)}} < \infty.
$$
Such a point 
is called a slow point.
\end{itemize}
\end{Thm}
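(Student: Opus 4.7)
The upper bounds $\limsup < \infty$ in all three items have already been obtained in Theorem~\ref{thm:slow} (for slow points) and in Remark~\ref{rmk:rapord} (inequalities \eqref{eqn:limrap} and \eqref{eqn:limord}, for rapid and ordinary points respectively). So the plan is to prove the matching strict lower bounds $\limsup > 0$ in each case, and to argue that the resulting points can be located in every non-empty open interval. By Remark~\ref{rmk:reduc01}, it suffices to work in $[0,1)$.

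The common strategy, as in \cite[Theorem 6.17]{MR3839281}, is to isolate from the wavelet decomposition \eqref{eqn:waveletdecomp} one dominant ``diagonal'' contribution. For a candidate point $t$ and an increment $s_n = t + \rho_n$ with $\rho_n \simeq 2^{-n}$, one writes
\[
B_H(s_n) - B_H(t) = D_n(t) + R_n(t),
\]
where $D_n(t)$ collects the coefficients at scale $n$ close to $k_n(t)$ (morally $\varepsilon_{n,k_n(t)}\,2^{-nH(t)}[\Psi(2^n s_n - k_n(t), H(s_n)) - \Psi(-k_n(t),H(t))]$) and $R_n(t)$ is the remaining error. The error $R_n(t)$ can be treated exactly as in the proofs of Lemmas~\ref{lemma1}--\ref{lemma3} (for slow points) or by the estimates \eqref{eqn:rapid1}--\eqref{eqn:rapid3} together with \eqref{eqn:ineforord1}--\eqref{eqn:ineforord2} (for rapid/ordinary points); it turns out to be $o$ of the conjectured lower bound. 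Here the \emph{strict} inequality $\gamma > H(t)$ in Condition~\ref{condii} is decisive: it ensures $|H(s)-H(t)| = o(|s-t|^{H(t)}\sqrt{\log|s-t|^{-1}})$ and in fact $o(|s-t|^{H(t)})$, so that the oscillation of the Hurst function does not contaminate the lower bound produced by $D_n(t)$.

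It then remains to produce in every dyadic sub-interval $\lambda_{j_0,k_0} \subset [0,1)$ a point $t$ for which $D_n(t)$ satisfies the desired lower bound along a subsequence. For rapid points, a first Borel--Cantelli argument on the Gaussian tails gives, in each dyadic cube, infinitely many scales $n$ and positions $k$ with $|\varepsilon_{n,k}| \geq c\sqrt{n}$; choosing $t$ in the corresponding dyadic cube yields $|D_n(t)| \gtrsim 2^{-nH(t)}\sqrt{n}$. For ordinary points, property~(c) (law of iterated logarithm) combined with Fubini gives the lower bound almost surely for almost every $t$. For slow points, one starts from $t \in S^{\mu}_{\text{low},m}$ produced by Theorem~\ref{thm:procedure} and observes that the construction only forces \emph{upper} bounds on $|\varepsilon_{j,k_j(t)}|$; a complementary Borel--Cantelli argument (applicable since the selection procedure in Theorem~\ref{thm:procedure} can be refined to keep $|\varepsilon_{n,k_n(t)}|$ bounded \emph{below} away from $0$ for infinitely many $n$) then yields $|D_n(t)| \gtrsim 2^{-nH(t)}$, proving the strict positivity.

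The main obstacle will be the last point: unlike for rapid or ordinary points, the slow point $t$ is \emph{itself} obtained from a random construction, so one has to show that during the extraction in Theorem~\ref{thm:procedure} one can simultaneously guarantee an upper bound on the far coefficients $\varepsilon_{j,k}$ ($k \in \Lambda^{l}_{j,m}(t)$) and a lower bound on the central ones $\varepsilon_{j,k_j(t)}$ along infinitely many $j$. This is the analogue for MBM of the refined slow-point construction carried out in \cite{esserloosveldt} for FBM, and the argument transfers essentially verbatim, the only new ingredient being the use of Condition~\ref{condii} to absorb the $\theta$-dependence of $\Psi$ through Lemma~\ref{lem:fromAB} and Lemma~\ref{lemma3}.
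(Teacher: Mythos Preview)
Your identification of the upper bounds is correct, and your reduction to $[0,1)$ via Remark~\ref{rmk:reduc01} is fine. However, the approach you sketch for the lower bounds is genuinely different from the paper's and contains a real gap.

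The paper does \emph{not} isolate a dominant diagonal term in the increment $B_H(s_n)-B_H(t)$. Instead, it works in the reverse direction: by the biorthogonality relation (Lemma~\ref{lem:wavexpr}), one has
\[
2^{-jH(t)}\varepsilon_{j,k_j(t)} = 2^{j}\int_{\R} B(u,H(t))\,\Psi(2^{j}u-k_j(t),-H(t)-1)\,du,
\]
and since the first moment of $\Psi(\cdot,-H(t)-1)$ vanishes, one may subtract $B_H(t)$ inside the integral and bound the coefficient by the local oscillation of $B_H$ (this is Proposition~\ref{prop:controlecoeff}). Lemma~\ref{lem:lowerbound} then supplies, on a single event of probability one, the three lower bounds on $|\varepsilon_{j,k_j(t)}|$ (valid respectively for some $t$, for almost every $t$, and for every $t$), and Condition~\ref{condii} kills the extra $2^{-\gamma j}$ term. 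In particular, for slow points no refinement of Theorem~\ref{thm:procedure} is needed: item~(1) of Lemma~\ref{lem:lowerbound} already gives $\limsup_j |\varepsilon_{j,k_j(t)}|>0$ for \emph{every}~$t$.

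Your direct decomposition $B_H(s_n)-B_H(t)=D_n(t)+R_n(t)$ runs into trouble at the claim ``$R_n(t)$ turns out to be $o$ of the conjectured lower bound''. The estimates in Lemmas~\ref{lemma1}--\ref{lemma3} and in Remark~\ref{rmk:rapord} bound the \emph{full} contribution of each block of scales by a quantity of the \emph{same} order as the target modulus (e.g.\ $O(2^{-nH(t)})$ for slow points, $O(2^{-nH(t)}\sqrt{n})$ for rapid points). Removing a single coefficient $\varepsilon_{n,k_n(t)}$ from a geometric sum $\sum_{j\le n}2^{j(1-\xi)}$ or from $\sum_{j>n}2^{-jH(t)}$ does not reduce the order of the remainder; the near-diagonal terms at neighbouring scales $j=n\pm 1$ are of comparable size and may cancel $D_n(t)$. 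The same issue arises for the off-diagonal terms $k\neq k_n(t)$ at scale $n$, since $\Psi$ is not compactly supported. Making this approach work would require either a delicate choice of the increment $s_n$ tuned to the specific wavelet $\Psi$, or a separate almost-sure argument showing that the signed remainder is small infinitely often; neither is provided, and neither follows from the cited lemmas. The biorthogonality route avoids this cancellation problem entirely, because integration against the dual system isolates a single coefficient exactly.
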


The method for the proof of Theorem \ref{thm:optimal} is based on a
wavelet-type argument and relies on a biorthogonality proprerty of
sequences of functions defined via $\Psi$: For any fixed $\theta \in \R$, the two sequences of functions
  $$
  \{ 2^{j/2} \Psi(2^{j}\cdot -k, \theta): (j,k) \in \Z^{2}\}
  \quad \text{and} \quad
  \{2^{j/2} \Psi(2^{j}\cdot -k, -\theta-1): (j,k) \in \Z^{2}\}
  $$
  are biorthogonal in $L^{2}(\R)$, see \cite[Proposition
5.13 (i)]{MR3839281}. This result allows to express
coefficients appearing in the decomposition \eqref{eqn:wavexp} in
terms of $B$, see \cite[Lemma 6.22]{MR3839281}.

\begin{Lemma}\label{lem:wavexpr}\cite{MR3839281}
On the event $\Omega^{\ast}_0$ of probability $1$, for every $\theta \in (0,1)$ and
 for all $(j,k) \in \Z^{2}$, one has
 $$
2^{-j \theta} \varepsilon_{j,k} = 2^{j} \int_{\R} B(u, \theta)
\Psi(2^{j}u-k, -\theta-1) du
$$
where $\varepsilon_{j,k}$ is given by the representation \eqref{eqn:wavexp}.
\end{Lemma}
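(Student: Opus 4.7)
The plan is to prove the formula by plugging the wavelet-type expansion \eqref{eqn:wavexp} of $B(u,\theta)$ directly into the integral on the right-hand side, swapping sum and integral, and then killing all but the $(j,k)$-term via the biorthogonality relation
$$
\int_{\R}\Psi(2^{j'}u-k',\theta)\,\Psi(2^{j}u-k,-\theta-1)\,du = 2^{-j}\,\delta_{j,j'}\,\delta_{k,k'},
$$
which is the restatement of the biorthogonality of the two families $\{2^{j/2}\Psi(2^{j}\cdot -k,\theta)\}$ and $\{2^{j/2}\Psi(2^{j}\cdot -k,-\theta-1)\}$ in $L^{2}(\R)$ recalled just above the lemma. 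Formally, this yields
$$
2^{j}\int_{\R} B(u,\theta)\,\Psi(2^{j}u-k,-\theta-1)\,du = 2^{-j\theta}\varepsilon_{j,k} - 2^{j}\Bigl(\sum_{j'\in\Z}\sum_{k'\in\Z} 2^{-j'\theta}\varepsilon_{j',k'}\Psi(-k',\theta)\Bigr)\int_{\R}\Psi(2^{j}u-k,-\theta-1)\,du,
$$
so the job reduces to two things: (i) verifying that the ``constant'' correction term vanishes, and (ii) justifying the interchange of sum and integral on the event $\Omega^{\ast}_0$.

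For (i), I would recall that the mother wavelet $\psi$ of Lemari\'e--Meyer has $\widehat{\psi}$ compactly supported and vanishing in a neighbourhood of the origin. Viewing $\Psi(\cdot,-\theta-1)$ as the inverse Fourier transform of $\xi\mapsto\widehat{\psi}(\xi)/|\xi|^{-\theta-1/2}$, the integral $\int_{\R}\Psi(t,-\theta-1)\,dt$ equals (up to a universal constant) the value at $\xi=0$ of $\widehat{\psi}(\xi)/|\xi|^{-\theta-1/2}$, which is zero because $\widehat{\psi}$ vanishes near $0$. Substituting $v=2^{j}u-k$ then gives $\int_{\R}\Psi(2^{j}u-k,-\theta-1)\,du = 2^{-j}\int_{\R}\Psi(v,-\theta-1)\,dv = 0$, so the correction term disappears.

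For (ii), which I expect to be the main technical hurdle, I would work on $\Omega^{\ast}_0$ and combine Lemma \ref{boundnormal} with the fast decay \eqref{eqn:fastdecay}. For any $L>1$,
$$
\sum_{j'\in\Z}\sum_{k'\in\Z} 2^{-j'\theta}|\varepsilon_{j',k'}|\int_{\R}\bigl|\Psi(2^{j'}u-k',\theta)\,\Psi(2^{j}u-k,-\theta-1)\bigr|\,du \lesssim C_{1}\sum_{j',k'} 2^{-j'\theta}\sqrt{\log(3+|j'|+|k'|)}\cdot I_{j,j',k,k'},
$$
where a change of variables together with \eqref{eqn:fastdecay} bounds $I_{j,j',k,k'}$ by $C\,2^{-\max(j,j')}(1+|k-2^{j-j'}k'|)^{-L}$ (for $j'\ge j$; the case $j'<j$ is symmetric). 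Summing first in $k'$ and then in $j'$, the series converges absolutely because $\theta\in(0,1)$ controls both the low-frequency ($j'\to-\infty$) and high-frequency ($j'\to+\infty$) tails. A similar bound applied to $\sum_{j',k'} 2^{-j'\theta}|\varepsilon_{j',k'}|\,|\Psi(-k',\theta)|$, which is finite by the same fast decay and log bound, legitimises separating off the constant term. Fubini then justifies exchanging sum and integral, and combining (i) and (ii) collapses the right-hand side to $2^{-j\theta}\varepsilon_{j,k}$, as desired.
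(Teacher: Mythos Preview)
The paper does not supply its own proof of this lemma; it is simply quoted from \cite{MR3839281}. Your overall strategy---insert the expansion \eqref{eqn:wavexp}, swap sum and integral on $\Omega^{\ast}_0$, then use biorthogonality together with the vanishing moment $\int_{\R}\Psi(v,-\theta-1)\,dv=0$---is the natural one and matches what is done in the cited reference.

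There is, however, a genuine gap in your justification (ii) at low frequencies. You assert that the two separated series
\[
\sum_{j',k'} 2^{-j'\theta}|\varepsilon_{j',k'}|\,I_{j,j',k,k'}
\qquad\text{and}\qquad
\sum_{j',k'} 2^{-j'\theta}|\varepsilon_{j',k'}|\,|\Psi(-k',\theta)|
\]
are both absolutely convergent, with ``$\theta\in(0,1)$ controlling the low-frequency tail''. This is false: for $j'\to-\infty$ the prefactor $2^{-j'\theta}$ blows up, while your bound $I_{j,j',k,k'}\lesssim 2^{-\max(j,j')}(\cdots)=2^{-j}(\cdots)$ gives no compensating decay in $j'$, and $|\Psi(-k',\theta)|$ carries none either. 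Both separated sums diverge as $j'\to-\infty$.

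The fix is not to separate at the level of the double series. Keep the difference $\Psi(2^{j'}u-k',\theta)-\Psi(-k',\theta)$ intact when establishing absolute convergence: for $j'<0$ the mean value theorem and \eqref{eqn:fastdecay} give an extra factor $2^{j'}$, so the combined term contributes $2^{-j'\theta}\cdot 2^{j'}=2^{j'(1-\theta)}$, which is summable since $\theta<1$. Once Fubini is justified for the combined series, split the integral \emph{termwise} (each individual $(j',k')$ integral is finite), and then the vanishing moment kills the $\Psi(-k',\theta)$ contribution in every single term while biorthogonality collapses the rest to $2^{-j\theta}\varepsilon_{j,k}$.
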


In the case of the MBM $B_{H} $,   Lemma \ref{lem:wavexpr} allows to write
\begin{equation}\label{eq:wavexpr}
    2^{-j H(t)} \varepsilon_{j,k_{j}(t)} = 2^{j} \int_{\R} B(u, H(t))
\Psi(2^{j}u-k_{j}(t), -H(t)-1) du.
\end{equation}





In order to state the next result, we recall  that a modulus of continuity is an increasing function $\sigma:\R^+ \to  \R^+$ satisfying 
 $\sigma (0) = 0$  and for which there is $C>0$ such that 
 $\sigma (2x) \leq C \sigma (x)  $
 for all $x \in \R^{+}$.  We say that $\sigma$ in submultiplicative if
 $\sigma (xy) \leq \sigma(x) \sigma(y)$ for all $x,y \in \R^{+}$.

 \begin{Rmk}\label{rem:submult}
   It is very classical that
   $$
   \sqrt{1+x+y} \leq \sqrt{1+x}\sqrt{1+y} \quad \text{and} \quad \log(3+x+y) \leq \log (3+x) \log (3+y)
   $$
   for all $x,y \in \R^{+}$. 
      It follows that the moduli of
      continuity of interest in Theorem \ref{thm:optimal} for the
       rapid and ordinary points are
      asymptotically equivalent as $r \to 0^{+}$ to the submultiplicative
      modulus of continuty given respectively by
      $$
      r \mapsto r^{H(t)} \sqrt{1+ \log r^{-1}} \quad \text{and} \quad
      r \mapsto r^{H(t)} \sqrt{1+ \log(3+ \log r^{-1})}.
      $$
 \end{Rmk}
 
 \begin{Rmk}
   The assumption that $\sigma$ in submultiplicative can be slightly
   weakened by imposing the existence of a constant $C>0$ such that
   $\sigma (xy) \leq C \sigma(x) \sigma(y)$ for all $x,y \in \R^{+}$. 
\end{Rmk}

 \begin{Prop}\label{prop:controlecoeff}
Let us consider $t \in \R$ and a submultiplicative modulus of
continuity $\sigma$ with  polynomial growth. 
Assume that  the Hurst function $H \, : \, \R \to [a,b]$, with $0<a<b<1$, is
such that there exists $\gamma > 0$ such that $H$ belongs to the
pointwise H\"older space $C^\gamma (t)$. Then on the event $\Omega^*_0$  of probability $1$, one has for every
$j$ large enough
$$
2^{-j H(t)} |\varepsilon_{j,k_{j}(t)}| \leq C \Big(\sup\left\{ 
\frac{|B_{H}(s) - B_{H} (t)|}{ \sigma(|s-t|)} :  |s-t| < c   2^{-j/2}\right\}
 \sigma(2^{-j}) +
2^{-\gamma j }\Big)
 $$
for a deterministic constant $c>0$ and a positive random variable $C$,
 where the
variables $\varepsilon_{j,k}$ are given by \eqref{eqn:waveletdecomp}
and where the supremum may take the value $+ \infty$. 
\end{Prop}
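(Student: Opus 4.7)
The plan is to start from the integral representation \eqref{eq:wavexpr},
$$2^{-jH(t)} \varepsilon_{j,k_j(t)} = 2^j \int_\R B(u, H(t))\, \Psi(2^j u - k_j(t), -H(t)-1)\, du,$$
and to exploit two cancellations. First, because $\widehat{\psi}$ vanishes in a neighbourhood of $0$, the Fourier transform of $\Psi(\cdot, -H(t)-1)$ vanishes at the origin, so $\int_\R \Psi(v, -H(t)-1)\, dv = 0$; this allows us to replace $B(u, H(t))$ by $B(u, H(t)) - B_H(t)$ inside the integral. Second, using $B_H(u) = B(u, H(u))$, I would decompose
$$B(u, H(t)) - B_H(t) = (B_H(u) - B_H(t)) + (B(u, H(t)) - B(u, H(u))).$$

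For the mixed term $B(u, H(t)) - B(u, H(u))$, I would combine Lemma \ref{lem:fromAB} with the $C^\gamma(t)$ assumption on $H$: there exists $R_t > 0$ such that for $|u-t|\leq R_t$, one has $|B(u, H(t)) - B(u, H(u))| \leq c_{K,1}\, C_1(\omega)\, c_t\, |u - t|^\gamma$. Multiplied by $2^j|\Psi(2^j u - k_j(t), -H(t)-1)|$, the fast decay \eqref{eqn:fastdecay} of $\Psi$ together with the change of variable $v = 2^j u - k_j(t)$ makes this part contribute at most a term of order $2^{-\gamma j}$, which is exactly the second summand on the right-hand side.

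For the main term $B_H(u) - B_H(t)$, I would split the integration at $|u - t| = c\, 2^{-j/2}$, choosing $c$ small enough that for $j$ large the near regime is contained in $[t - R_t, t+R_t]$. Denote by $\mathcal{S}$ the supremum appearing in the statement (if $\mathcal{S} = +\infty$ the inequality is trivial). On the near regime, $|B_H(u) - B_H(t)| \leq \mathcal{S}\,\sigma(|u - t|)$. After the change of variable $v = 2^j u - k_j(t)$, the relation $|k_j(t) - 2^j t| \leq 1$ gives $|u-t| \leq 2^{-j}(|v| + 1)$, and submultiplicativity yields $\sigma(|u - t|) \leq \sigma(2^{-j})\,\sigma(|v| + 1)$, so the contribution is bounded by
$$\mathcal{S}\,\sigma(2^{-j}) \int_\R \sigma(|v|+1)\,|\Psi(v, -H(t)-1)|\, dv,$$
and the integral is finite by polynomial growth of $\sigma$ combined with the fast decay of $\Psi$.

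On the far regime, $|2^j u - k_j(t)| \geq c\, 2^{j/2} - 1$, so fast decay of $\Psi$ delivers decay in $2^{j/2}$ of arbitrary polynomial order, which easily absorbs the (at most) polynomial-in-$|u|$ bound on $|B(u, H(t)) - B_H(t)|$ coming from the fact that $B(\cdot, H(t))$ is a fractional Brownian motion of Hurst parameter $H(t) \in [a,b]$ with almost surely polynomially-growing sample paths. This piece decays faster than any negative power of $2^j$, hence is negligible compared to $2^{-\gamma j}$. The main obstacle is the bookkeeping around submultiplicativity (keeping the constant $c$ uniform and confirming that $\int \sigma(|v|+1)|\Psi(v, -H(t)-1)|\, dv$ is finite as $H(t)$ varies in $[a,b]$), together with producing a uniform polynomial bound on $B(\cdot, \theta)$ that is independent of $\theta \in [a,b]$ on each compact; neither step is deep, but both require care.
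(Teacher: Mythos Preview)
Your proposal is correct and follows essentially the same approach as the paper: start from the integral representation \eqref{eq:wavexpr}, use the vanishing moment of $\Psi(\cdot,-H(t)-1)$ to subtract $B_H(t)$, decompose $B(u,H(t))-B_H(t)$ into the increment $B_H(u)-B_H(t)$ and the mixed term $B(u,H(t))-B(u,H(u))$, handle the latter via Lemma~\ref{lem:fromAB} together with the $C^\gamma(t)$ hypothesis, and split the former at scale $2^{-j/2}$ using submultiplicativity and fast decay. Your far-regime treatment, invoking the almost-sure polynomial growth of the sample paths of $u\mapsto B(u,H(t))$, is in fact slightly more careful than the paper, which appeals to ``boundedness of the process $B$'' --- a statement that is not literally true on all of $\R$ but whose intended consequence (absorption by the fast decay of $\Psi$) is exactly what you spell out.
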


\begin{proof}
As the first moment of $\Psi$ vanishes, see  \cite[Remark
5.12]{MR3839281},  we get by using the equality \eqref{eq:wavexpr} and
the change of variables $y = 2^{j}u-k$
\begin{eqnarray}\label{eq:1}
&& 2^{-j H(t)} |\varepsilon_{j,k}| \nonumber \\
& \leq &  2^{j}\int_{\R} 
\big|B(u, H(t)) - B(t, H(t))\big| \big|\Psi(2^{j}u-k, -H(t)-1)\big|  du \nonumber\\
& =  & \int_{|y|\leq 2^{j/2} }
       \big|B(\frac{k+y}{2^{j}}, H(t)) - B(\frac{k+y}{2^{j}} , H(\frac{k+y}{2^{j}}))\big| \big|\Psi(y, -H(t)-1)\big|  dy \nonumber \\ 
  &&+  \int_{|y|\leq 2^{j/2} }
       \big|B(\frac{k+y}{2^{j}}, H(\frac{k+y}{2^{j}})) - B(t,
     H(t))\big| \big|\Psi(y, -H(t)-1)\big|  dy\nonumber \\
&& + \int_{ |y|> 2^{j/2} }\big|B(\frac{k+y}{2^{j}}, H(t)) - B(t, H(t))\big|
   \big|\Psi(y, -H(t)-1)\big| dy  
\end{eqnarray}
where $k := k_{j}(t)$.  
Let us now provide an appropriate
upper bound for each of term in the right-hand side of
\eqref{eq:1}. Note that the assumption of regularity  on $H$ implies that there is  a neighborhood $I$
of $t$ and a constant $c_{0}>0$ such that 
\begin{equation}\label{eq:regH}
|H(s)-H(t)| \leq c_{0} |s-t|^{\gamma} \quad \forall t \in I.  
\end{equation}
Now, for the first term we notice that  
\begin{equation}\label{eq:distt}
  |t-\frac{k+y}{2^{j}}| \leq 2^{-j} |y + (k-2^{j}t)|\leq 2^{-j}(|y|+1) \end{equation}
and in particular if $|y|\leq 2^{j/2}$, then $\frac{k+y}{2^{j}} \in I$
for large $j$. It follows that 
\begin{eqnarray}\label{eq:2}
&& \int_{|y|\leq 2^{j/2} }
       \big|B(\frac{k+y}{2^{j}}, H(t)) - B(\frac{k+y}{2^{j}}, H(\frac{k+y}{2^{j}}))\big| \big|\Psi(y, -H(t)-1)\big|  dy \nonumber \\ 
  & \leq & c_{I,1}C_{1} \Big(2^{j}\int_{|y|\leq 2^{j/2} }
\big|H(t) - H(\frac{k+y}{2^{j}}) \big| \big|\Psi(y, -H(t)-1)\big|
           dy \nonumber \\
  & \leq & C_{2} \Big(2^{j}\int_{|y|\leq 2^{j/2} }
\big|t - \frac{k+y}{2^{j}} \big|^{\gamma} \big|\Psi(y, -H(t)-1)\big|
           dy \nonumber \\
 & \leq & C_{2} 2^{-\gamma j} \int_{\R} \big(1+ |y| \big)^{\gamma} \big|\Psi(y, -H(t)-1)\big|
          dy \nonumber \\
  &\leq  & C_{3} 2^{-\gamma j} 
\end{eqnarray}
for some positive random constants $C_{2}, C_{3}$, by using
successively  
Lemma \ref{lem:fromAB}, equations \eqref{eq:regH}, \eqref{eq:distt} and \eqref{eqn:fastdecay}.

For the second term, if $|y|\leq 2^{j/2}$, inequality \eqref{eq:distt}
gives the existence of a constant $c>0$ such that $
|t-\frac{k+y}{2^{j}}|\leq c 2^{-j/2}$. Hence
\begin{eqnarray}\label{eq:3}
 && \int_{|y|\leq 2^{j/2} }
       \big|B(\frac{k+y}{2^{j}}, H(\frac{k+y}{2^{j}})) - B(t,
     H(t))\big| \big|\Psi(y, -H(t)-1)\big|  dy  \nonumber \\
& \leq &   \sup\left\{ 
\frac{|B_{H}(s) - B_{H} (t)|}{ \sigma(|s-t|)} :  |s-t| < c
         2^{-j/2}\right\} \int_{|y|\leq 2^{j/2} }
         \sigma(|t-\frac{k+y}{2^{j}}|) \big|\Psi(y, -H(t)-1)\big| dy \nonumber \\
  & \leq & \sup\left\{ 
\frac{|B_{H}(s) - B_{H} (t)|}{ \sigma(|s-t|)} :  |s-t| < c
         2^{-j/2}\right\} \sigma(2^{-j})\int_{\R }
           \sigma(|y|+1) \big|\Psi(y, -H(t)-1)\big| dy \nonumber \\
 & \leq &c_{2 }\sup\left\{ 
\frac{|B_{H}(s) - B_{H} (t)|}{ \sigma(|s-t|)} :  |s-t| < c
         2^{-j/2}\right\} \sigma(2^{-j})
\end{eqnarray}
for a constant $c_{2}>0$, using  \eqref{eq:distt}, the
submultiplicativity property of $\sigma$, 
\eqref{eqn:fastdecay} and the polynomial growth of $\sigma$.

The upper bound for the last term is obtained using again the fast
decay \eqref{eqn:fastdecay} of
the wavelet for $L \geq 2 \gamma$ together with the boundedness of the process $B$. Indeed,
on can write 
\begin{eqnarray}\label{eq:4}
&&\int_{ |y|> 2^{j/2} }\big|B(\frac{k+y}{2^{j}}, H(t)) - B(t, H(t))\big|
   \big|\Psi(y, -H(t)-1)\big| dy  \nonumber  \\
& \leq & C_{3} \int_{ |y|> 2^{j/2} }
         \frac{1}{(1+|y|)^{2L}} du \nonumber\\
& \leq & C_{3} 2^{-Lj/2}\int_{\R }\frac{1}{(1+|y|)^{L}} dy\nonumber\\
& \leq & C_{3}' 2^{-\gamma j}
\end{eqnarray}
for some a positive random constant $C_{3},C_{3}'$. Putting together equations
\eqref{eq:1}, \eqref{eq:2}, \eqref{eq:3} and
\eqref{eq:4}  leads to the conclusion.
\end{proof}

In order to prove Theorem \ref{thm:optimal}, it suffices now to
provide convenient asymptotic lower bounds for the coefficients
$\varepsilon_{j,k}$. We summarize the useful known results of
\cite{MR3839281}, \cite{ayacheesserkleyntssens} and
\cite{esserloosveldt} in the following Lemma.

\begin{Lemma}\label{lem:lowerbound}\cite{MR3839281,ayacheesserkleyntssens,esserloosveldt}
 Let $(\varepsilon_{j,k})_{(j,k)\in \Z^{2}}$ be a sequence of independent 
 $\mathcal{N}(0,1)$ random variables.  There exists an event
  $\Omega^*_{2}$ of probability $1$  on which
  \begin{enumerate}
  \item for every $t \in \R$, one has
$$
\limsup_{j \to + \infty} |\varepsilon_{j, k_{j}(t)} |
\geq 2^{-3/2} \sqrt{\pi} \, , 
$$
  \item for every non-empty open interval $I$ of $\R$, there is $t\in I$ such that 
\begin{equation*}
\limsup_{j \to + \infty } \frac{ |\varepsilon_{j,k_{j}(t)}| }{ \sqrt{j}}  >0 \, ,
\end{equation*}
  \item for almost every $t \in \R$, one has
\begin{equation*}\label{eq:lowerbound2}
\limsup_{j \to + \infty }  \dfrac{|\varepsilon_{j, k_{j}(t)}|
}{\sqrt{\log j}} >0 \, .
\end{equation*}
  \end{enumerate}
\end{Lemma}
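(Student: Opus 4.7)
I would prove the three parts with three different tools.

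\textbf{Proof of (i).} I would exhibit the failure set as a subcritical Galton-Watson binary tree. Fix any $c<\Phi^{-1}(3/4)$, so that $q:=P(|\varepsilon_{0,0}|<c)=2\Phi(c)-1<1/2$; the stated value $c=2^{-3/2}\sqrt{\pi}\approx 0.627$ meets this. For each $(N,k_0)\in\N\times\Z$, call a dyadic sub-interval $\lambda_{j,k}\subset\lambda_{N,k_0}$ alive when $|\varepsilon_{j,k}|<c$, and consider the tree rooted at $\lambda_{N,k_0}$ of nested alive descendants. By the independence of the $\varepsilon_{j,k}$ across $(j,k)$, this is a Galton-Watson binary tree with offspring distribution Bin$(2,q)$ and mean $2q<1$. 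An infinite ray in this tree encodes exactly a point $t\in\lambda_{N,k_0}$ with $|\varepsilon_{j,k_j(t)}|<c$ for every $j\geq N$. Subcritical Galton-Watson processes go extinct almost surely, so no such infinite ray exists a.s. Taking the countable intersection of these a.s.\ events over $(N,k_0)$ gives an event of probability one on which, for every $t\in\R$ and every $N$, at least one $j\geq N$ produces $|\varepsilon_{j,k_j(t)}|\geq c$; this is the $\limsup$ claim.

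\textbf{Proof of (ii).} I would run a nested-dyadic construction. Fix $\delta\in(0,\sqrt{2\log 2})$. For a dyadic box $\lambda_{n,k_0}$ the level-$j$ children number $2^{j-n}$ and Mills' estimate gives $P(|\varepsilon_{0,0}|>\delta\sqrt{j})\gtrsim e^{-\delta^{2}j/2}/\sqrt{j}$, so by independence
\[
P\bigl(\max_{\lambda_{j,k}\subset\lambda_{n,k_0}}|\varepsilon_{j,k}|<\delta\sqrt{j}\bigr)\leq\exp\bigl(-c_0\, 2^{j-n}e^{-\delta^{2}j/2}/\sqrt{j}\bigr),
\]
which is summable in $j$ since $\delta<\sqrt{2\log 2}$. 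Borel-Cantelli then produces, a.s., infinitely many $j$ admitting some $\lambda_{j,k}\subset\lambda_{n,k_0}$ with $|\varepsilon_{j,k}|\geq\delta\sqrt{j}$. Starting from any dyadic $\lambda_{n_0,k_0}\subset I$, I would recursively select $j_1<j_2<\cdots$ and $k_i$ with $\lambda_{j_i,k_i}\subset\lambda_{j_{i-1},k_{i-1}}$ and $|\varepsilon_{j_i,k_i}|\geq\delta\sqrt{j_i}$; each step succeeds on an event of full probability. The unique $t\in\bigcap_{i}\lambda_{j_i,k_i}$ satisfies $k_{j_i}(t)=k_i$, whence $\limsup_j|\varepsilon_{j,k_j(t)}|/\sqrt{j}\geq\delta>0$. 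A countable intersection over starting dyadic intervals, combined with the observation that every non-empty open $I$ contains one, yields the uniform statement.

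\textbf{Proof of (iii) and main obstacle.} I would combine Borel-Cantelli with Fubini. For a fixed $t\in\R$, the pairs $(j,k_j(t))_{j}$ are pairwise distinct, so $(\varepsilon_{j,k_j(t)})_{j}$ is an i.i.d.\ $\mathcal{N}(0,1)$ sequence. For any $\delta<\sqrt{2}$, Mills' lower bound gives $\sum_{j}P(|\varepsilon_{j,k_j(t)}|>\delta\sqrt{\log j})=\infty$, and independence lets me conclude via the second Borel-Cantelli lemma that $\limsup_{j}|\varepsilon_{j,k_j(t)}|/\sqrt{\log j}\geq\delta$ almost surely. Fubini applied to the indicator of $\{\limsup<\delta\}$ on $\Omega\times\R$ (probability times Lebesgue) then converts the vanishing of $t$-sections into almost sure vanishing of the $\omega$-sections, which is exactly the pointwise-a.e.\ conclusion. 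The main obstacle is the quantitative balance in part (ii) between the exponential tail $e^{-\delta^{2}j/2}$ at each Gaussian and the $2^{j-n}$ independent draws available at level $j$: the crossover occurs exactly at $\delta=\sqrt{2\log 2}$, which forces me to stay strictly below this threshold, after which the nested selection and König-type compactness finish the job. Parts (i) and (iii) are by contrast essentially soft, relying only on extinction of subcritical branching processes and on standard i.i.d.\ Gaussian tail estimates.
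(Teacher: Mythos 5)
The paper does not actually prove this lemma: it is imported from \cite{MR3839281}, \cite{ayacheesserkleyntssens} and \cite{esserloosveldt}, so there is no in-paper argument to compare against. Your three proofs are, in substance, the standard ones from those references. Part (i) is the first-moment count on chains of nested dyadic intervals surviving the event $\{|\varepsilon_{j,k}|<c\}$; your subcritical Galton--Watson phrasing is equivalent, since the expected number of alive nodes at depth $d$ is $(2q)^{d}\to 0$, and the constant $2^{-3/2}\sqrt{\pi}$ is precisely the point at which the crude bound $P(|\varepsilon_{0,0}|\leq c)< c\sqrt{2/\pi}$ yields $q<1/2$. Part (ii) is the nested Borel--Cantelli selection with the correct critical threshold $\delta<\sqrt{2\log 2}$, and part (iii) is the second Borel--Cantelli lemma applied to the independent sequence $(\varepsilon_{j,k_{j}(t)})_{j}$ followed by Fubini. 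All three arguments are sound.

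Two small repairs are needed in part (ii). First, ``each step succeeds on an event of full probability'' is not a valid way to run the recursion, because the interval $\lambda_{j_{i-1},k_{i-1}}$ into which you recurse is itself random; you must first intersect the almost sure events over \emph{all} dyadic intervals (countably many) and only then perform the selection deterministically on that single event --- your closing sentence suggests this, so it is only a matter of ordering the quantifiers. Second, with the half-open convention $\lambda_{j,k}=[k2^{-j},(k+1)2^{-j})$ the intersection $\bigcap_{i}\lambda_{j_i,k_i}$ can be empty (take $\lambda_{j_i,k_i}=[1-2^{-j_i},1)$): the limit point $t$ of the nested closures may be the common right endpoint, in which case $k_{j_i}(t)\neq k_i$ and the selected coefficients are not those attached to $t$. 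This is fixed, for instance, by continuing the search at step $i+1$ inside the left half of $\lambda_{j_i,k_i}$, which forces $t<(k_i+1)2^{-j_i}$ for every $i$. With these adjustments the proposal is complete.
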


 The proof of the main result of this section is now straightforward. 
 
 \begin{proof}[Proof of Theorem \ref{thm:optimal}]
From Theorem
\ref{thm:slow}, equation  \eqref{eqn:limord}  and equation
\eqref{eqn:limrap}, it suffices to prove the three lower bounds. 

Let us work on the event $\Omega^\ast \cap \Omega^\ast_2$ of
probability $1$. In each case, if $\sigma$ denotes the corresponding
 modulus of continuity, we know from Proposition
  \ref{prop:controlecoeff} and  Remark \ref{rem:submult} that 
$$
2^{-j H(t)} |\varepsilon_{j,k_{j}(t)}| \leq C \Big(\sup\left\{ 
\frac{|B_{H}(s) - B_{H} (t)|}{ \sigma(|s-t|)} :  |s-t| < c   2^{-j/2}\right\}
 \sigma(2^{-j}) +
2^{-\gamma j }\Big)
$$
with $\gamma > H(t)$ by Condition \ref{condii}.  Lemma
\ref{lem:lowerbound} then implies that
$$
0 < \limsup_{j \to +\infty} \frac{
  |\varepsilon_{j,k_{j}(t)}|}{\sigma(2^{-j})} \leq C \lim_{j \to +
  \infty }\sup\left\{ 
\frac{|B_{H}(s) - B_{H} (t)|}{ \sigma(|s-t|)} :  |s-t| < c   2^{-j/2}\right\} 
$$
since $\frac{2^{-\gamma j}}{\sigma(2^{-j})} $ tends to $0$ as $j$ tends to
infinity. 
\end{proof}


\section{Extensions}

The methodology developed in the previous sections can easily be
adapted to study very general random wavelet series of the form
$$
 f_H = \sum_{j \in \N} \sum_{k \in \Z} \varepsilon_{j,k} 2^{-H(k2^{-j})j} \psi(2^j \cdot -k )
$$
where $(\varepsilon_{j,k})_{(j,k)\in \Z^{2}}$ still denotes a sequence of
i.i.d. $\mathcal{N}(0,1)$ random variables. Amoung the families
of wavelet basis that exist, we will work with two classes:
The Lemari\'e-Meyer wavelets for which  
 $\psi$ belong to the Schwartz class $\mathcal{S}(\R)$, or Daubechies wavelets for which   $\psi$ is a 
compactly supported function (see \cite{Daubechies:92}). In both cases, the first moment of the wavelet $\psi$ vanishes.
We will also include the setting given by biorthogonal wavelet basis, \cite{MR1161250,MR1162365}.

Clearly,  as soon as we work with a compactly supported wavelet or a wavelet which decays is sufficiently fast, one can make sure that the function $f_H$ is almost surely well-defined, exploiting Lemma \ref{boundnormal}. 
The process $f_H$ gives a multifractal version of the random series studied in
\cite{esserloosveldt}, by substituting the exponent $h$
at level $(j,k)$  by $H(k2^{-j})$ as done in
\cite{bbcI00}. Of course, this model can not be used to represent MBM. Nevertheless, we believe that it can have its own interest as it can be used to numerically simulate multifractional signals more efficiently than by considering the random series \eqref{eqn:waveletdecomp} since one can avoid the computation of the fractional primitives. Moreover, concerning the pointwise regularity, we will show that we do not alter the results obtained in the previous sections.

The biorthogonality of the wavelets allows to state in our present context the
following resut similar to  Proposition \ref{prop:controleWcoeff}.

\begin{Prop}\label{prop:controleWcoeff}
Let us consider $t \in \R$ and a submultiplicative modulus of
continuity $\sigma$. 
Assume that
$$
f= \sum_{j \in \Z}\sum_{k \in \Z}c_{j,k} \psi(2^{j}\cdot -k)
$$
is a bounded function and that the wavelet $\psi$  satisfies
$$
\sup_{y \in \R} (1+|y|)^{2L} |\psi(y)| < + \infty
$$
for some $L>0$, and
$$
\int_{\R} \sigma(1+|y|) |\psi(y)| dy < +\infty. 
$$
Then for every $j$ large enough, one has
$$
 |c_{j,k_{j}(t)}| \leq c \Big(\sup\left\{ 
\frac{|f(s) - f(t)|}{ \sigma(|s-t|)} :  |s-t| < c   2^{-j/2}\right\}
 \sigma(2^{-j}) +
2^{-L j/2 }\Big)
 $$
for a constant $c>0$, where the supremum may take the value $+ \infty$. 
\end{Prop}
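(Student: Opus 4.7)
The plan is to extract the coefficient $c_{j,k_j(t)}$ by testing $f$ against the (dual) wavelet, using the biorthogonality of the basis, and then to control the resulting integral with the same near/far split that was used in the proof of Proposition \ref{prop:controlecoeff}. Throughout, write $k = k_j(t)$ for brevity.

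First I would use the biorthogonal expansion (or the orthonormal identity, in the Lemari\'e--Meyer or Daubechies cases where $\psi$ is self-dual up to normalization) to write
\[
c_{j,k} = 2^{j}\int_{\R} f(u)\,\psi(2^{j}u-k)\,du.
\]
Since the wavelet has vanishing zeroth moment, I may subtract the constant $f(t)$ inside the integral and perform the change of variables $y=2^{j}u-k$, obtaining
\[
c_{j,k} = \int_{\R}\Bigl(f\bigl(\tfrac{k+y}{2^{j}}\bigr)-f(t)\Bigr)\,\psi(y)\,dy.
\]

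Next I would split this integral into the two regions $\{|y|\le 2^{j/2}\}$ and $\{|y|>2^{j/2}\}$. On the near-diagonal part, the elementary estimate $|t-(k+y)/2^{j}|\le 2^{-j}(1+|y|)\le c\,2^{-j/2}$ (the analogue of \eqref{eq:distt}) implies that $(k+y)/2^{j}$ lies within distance $c\,2^{-j/2}$ of $t$, so the integrand is bounded by the supremum appearing in the statement times $\sigma(2^{-j}(1+|y|))$. Submultiplicativity of $\sigma$ then factors this as $\sigma(2^{-j})\,\sigma(1+|y|)$, and the hypothesis $\int_{\R}\sigma(1+|y|)|\psi(y)|\,dy<+\infty$ turns the remaining integral into a finite constant. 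This yields the first term in the announced bound.

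On the far part $|y|>2^{j/2}$ I would use the boundedness of $f$ (so that $|f((k+y)/2^{j})-f(t)|\le 2\|f\|_{\infty}$) together with the decay hypothesis $(1+|y|)^{2L}|\psi(y)|\le M$. Writing $(1+|y|)^{-2L}=(1+|y|)^{-L}\cdot(1+|y|)^{-L}$ and using $(1+|y|)^{-L}\le 2^{-Lj/2}$ on $\{|y|>2^{j/2}\}$, the remaining factor $\int_{\R}(1+|y|)^{-L}\,dy$ is finite as soon as $L>1$, which gives the $2^{-Lj/2}$ term. Combining the two contributions produces the stated estimate.

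The main obstacle is really just the first step: justifying the integral representation $c_{j,k}=2^{j}\int f\cdot\psi(2^{j}\cdot-k)$ in full generality. In the orthonormal cases this is immediate from the scalar product, but in the truly biorthogonal setting one must test $f$ against the dual wavelet $\widetilde\psi$ rather than $\psi$, and hence implicitly assume that $\widetilde\psi$ satisfies the same decay and the same integrability condition against $\sigma(1+|\cdot|)$ as $\psi$; the rest of the argument goes through verbatim with $\psi$ replaced by $\widetilde\psi$ in the integral.
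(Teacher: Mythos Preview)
Your proposal is correct and follows essentially the same route as the paper: express $c_{j,k_j(t)}$ via the (bi)orthogonal integral formula, subtract $f(t)$ using the vanishing moment, change variables to $y=2^{j}u-k$, and split into $\{|y|\le 2^{j/2}\}$ and $\{|y|>2^{j/2}\}$, handling the near part by submultiplicativity of $\sigma$ and the far part by boundedness of $f$ together with the $(1+|y|)^{-2L}$ decay. Your remarks about needing $L>1$ for the far integral to converge and about testing against the dual wavelet $\widetilde\psi$ in the genuinely biorthogonal case are well taken and are points the paper leaves implicit.
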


\begin{proof}
  The (bi)orthogonality of the  wavelets allows to write
  \[
c_{j,k}=2^{j}\int_{\R}f(x) \psi(2^ju-k)\, du.
\]
Using similar arguments as in Proposition \ref{prop:controlecoeff},
for $k=k_j(t)$, we can write
\begin{eqnarray*}\label{eq:1bis}
  |c_{j,k}| 
& \leq &  2^{j}\int_{\R} 
\big|f(u)- f(t)\big| \big|\psi(2^{j}u-k)\big|  du \\
& =  & \int_{|y|\leq 2^{j/2} }
       \big|f(\frac{k+y}{2^{j}}) - f(t)\big| \big|\psi(y)\big|  dy + \int_{ |y|> 2^{j/2} }\big|f(\frac{k+y}{2^{j}}) - f(t)\big|
   \big|\psi(y)\big| dy  \\
 & \leq & \sup\left\{ 
\frac{|f(s) - f(t)|}{ \sigma(|s-t|)} :  |s-t| < c   2^{-j/2}\right\}
          \sigma(2^{-j})\int_{\R} \sigma(1+|y|)  \big|\psi(y)\big|
          dy\\
  &&          + 2\|f\|_{\infty} 2^{- Lj/2} \int_{\R} \frac{1}{(1+|y|)^{L}}dy
\end{eqnarray*}
hence the conclusion.
  \end{proof}

This section aims at showing that $f_{H}$ still shares the same features as MBM
when one considers its pointwise regularity. Moreover, in this context, we can significantly reduce the condition made on the regularity of the function $H$ to obtain the results. In the sequel, Condition \ref{condi} is replaced by the following.

\begin{Cond}\label{condibis}
The Hurst function $H \, : \, \R \to [a,b]$, with $0<a<b<1$, is such that for all $t \in \R$ there exist $R_t>0$ and $c_t>0$ such that
\[ |H(s)-H(t)| \leq \frac{c_t}{\log |s-t|^{-1}} \]
for all $s \in \R$ with $|s-t| \leq R_t$.
\end{Cond}

\begin{Rmk}
Of course, any function $H$ satisfying Condition \ref{condibis} is necessarily continuous and any H\"older-continuous function satisfies Condition \ref{condibis}. In particular, Condition \ref{condibis} is weaker than Condition \ref{condi}.
\end{Rmk}

\begin{Rmk}
 In \cite{MR1626706}, it is proved that if $H$ is
the function ``H\"older exponent'' of a continuous function, then there
exists a sequence $(P_j)_{j \in \N_{0}}$ of polynomials such that
\begin{equation}\label{eq:hmeyer}
 \left\{
\begin{array}{l}
H(t) = \liminf_{j \to + \infty} P_j(t) \\[2ex] 
\ \| D P_j \|_\infty \leq j  , \quad \forall j \in \N_{0}. \\ 
\end{array} 
\right. 
\end{equation}
Because of Condition \ref{condibis}, our function $H$ is not as general, but if a function $H$ satisfies  \eqref{eq:hmeyer} and if we assume the existence of a constant $C>0$ such that, for all $t \in \R$ and $j \in \N_{0}$,
\begin{equation}\label{eq:convergencePJ}
|H(t)-P_j(t)| \leq c_t j^{-1}
\end{equation}
then Condition \ref{condibis} is satisfied. Note that the authors in \cite{MR4398453} assume a condition similar to \eqref{eq:convergencePJ} to prove a law of the iterated logarithm for a multifractional extension of BM defined using the Faber-Schauder base.
\end{Rmk}

Our result concerning the pointwise regularity of the process $f_{H}$ can
then be stated as follow.

\begin{Thm}\label{thm:main2}
If the function $H \, : \, \R \to [a,b]$ satisfies the Condition \ref{condibis} then almost surely, for every non-empty interval $I$ of $\R$,
\begin{itemize}
\item  there exists $t \in I$  such that
\begin{equation}\label{eqn:rapid1bis}
0<\limsup_{s \to t} \frac{|f_{H}(s)-f_{H}(t)|}{|s-t|^{H(t)} \sqrt{\log|s-t|^{-1}}} < \infty,
\end{equation}
\item almost every point $t \in I$ is such that
\begin{equation}\label{eqn:ord1bis}
0<\limsup_{s \to t} \frac{|f_{H}(s)-f_{H}(t)|}{|s-t|^{H(t)} \sqrt{\log\log|s-t|^{-1}}} < \infty,
\end{equation}
\item  there exists $t \in I$ such that
\begin{equation}\label{eqn:slow1}
0<\limsup_{s \to t} \frac{|f_{H}(s)-f_{H}(t)|}{|s-t|^{H(t)}} < \infty.
\end{equation}
\end{itemize}
\end{Thm}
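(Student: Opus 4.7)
The plan is to transpose the machinery of Theorems \ref{thm:slow} and \ref{thm:optimal} to the simpler setting of a direct random wavelet series, handling the only new subtlety: the power of $2$ at level $(j,k)$ is $2^{-H(k2^{-j})j}$ instead of $2^{-H(t)j}$. The key observation is that under Condition \ref{condibis}, whenever $|k2^{-j}-t|\leq R_t$ one has
\[
j\,|H(k2^{-j}) - H(t)| \leq \frac{c_t\,j}{\log|k2^{-j}-t|^{-1}},
\]
so that for $|k-k_j(t)|\leq 2^{ml}$ with $ml\leq j/2$ the exponent $j(H(t)-H(k2^{-j}))$ is bounded by a constant depending only on $c_t$. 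The factor $2^{-H(k2^{-j})j}$ can therefore be replaced by $2^{-H(t)j}$ up to a multiplicative constant in all the main estimates, which is exactly what lets the merely logarithmic regularity of $H$ imposed by Condition \ref{condibis} replace the pointwise H\"older regularity used for MBM.

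For the upper bounds, I would apply the slow-point procedure of Theorem \ref{thm:procedure} with $m>0$ such that $1/m<a$, reduce to a dyadic subinterval as in Remark \ref{rmk:reduc01}, select $t\in S_{\text{low},m}^{\mu}$, and for $s$ with $2^{-n}\leq|s-t|\leq 2^{-n+1}$ split the series at level $n$. For $j\leq n$, a first order Taylor expansion of $\psi$ combined with the partition $\bigcup_{l}\Lambda_{j,m}^{l}(t)$ and the fast decay of $\psi$ reproduces the scheme of Lemma \ref{lemma1}. For $j>n$, an analog of Lemma \ref{lemma2} produces a bound of the form $c_{m}\mu\,2^{(1/m-H(t))(j-n)}2^{-H(t)n}$, which is summable since $1/m<a$. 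In both regimes the factor $2^{j(H(t)-H(k2^{-j}))}$ is absorbed via the preceding observation when $l$ is small, while the contribution of large $l$ is killed by the wavelet decay $(3+|2^{j}x-k|)^{-L}$ once $L$ is chosen large enough (arbitrarily large for Lemari\'e--Meyer wavelets, and by choosing a smooth enough Daubechies or biorthogonal wavelet otherwise). This establishes the upper bound in \eqref{eqn:slow1}; using \eqref{eq:boundnorm} and then the localised bound \eqref{eqn:moustiquet} in place of the slow-point estimate yields the upper bounds in \eqref{eqn:rapid1bis} and \eqref{eqn:ord1bis}, exactly as in Remark \ref{rmk:rapord}.

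For the lower bounds, I would apply Proposition \ref{prop:controleWcoeff} directly to $f_{H}$, whose wavelet coefficients are $c_{j,k}=\varepsilon_{j,k}\,2^{-H(k2^{-j})j}$, with the submultiplicative modulus $\sigma$ associated to each of the three cases through Remark \ref{rem:submult}. Since $|k_{j}(t)2^{-j}-t|\leq 2^{-j}$, Condition \ref{condibis} gives $j\,|H(k_{j}(t)2^{-j})-H(t)|\leq c_{t}/\log 2$, so that $2^{-H(k_{j}(t)2^{-j})j}$ is comparable to $2^{-H(t)j}$; choosing $L$ so that $L/2>b$ makes the error term $2^{-Lj/2}$ negligible compared to $2^{-H(t)j}\sigma(2^{-j})$. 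The three items of Lemma \ref{lem:lowerbound}, applied on an event of probability $1$, then transfer the asymptotic lower bounds on $|\varepsilon_{j,k_{j}(t)}|$ into the three positive lower limits claimed in \eqref{eqn:rapid1bis}, \eqref{eqn:ord1bis} and \eqref{eqn:slow1}, exactly as in the proof of Theorem \ref{thm:optimal}.

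The main obstacle I anticipate is the bookkeeping of the factor $2^{j(H(t)-H(k2^{-j}))}$ throughout the upper-bound step, and in particular the verification that for indices $l$ with $ml>j/2$, where only the trivial bound $|H(k2^{-j})-H(t)|\leq b-a$ is available, the exponential blow-up $2^{j(b-a)}$ is still dominated by the wavelet decay $2^{-Lml}$; this is the precise quantitative place where Condition \ref{condibis} takes over the role played by pointwise H\"older regularity in the analysis of MBM, and fixes the minimal regularity that must be demanded of the chosen wavelet.
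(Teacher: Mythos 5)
Your proposal follows essentially the same route as the paper: the same split of the sum over $k$ according to whether $k2^{-j}$ lies within roughly $2^{-j/2}$ of $t$ (using Condition \ref{condibis} to absorb the factor $2^{j(H(t)-H(k2^{-j}))}$ in the near regime and the wavelet decay with $L$ large enough in the far regime), the same slow/rapid/ordinary upper-bound machinery transposed from Lemma \ref{lemma1}, Lemma \ref{lemma2} and Remark \ref{rmk:rapord}, and the same lower-bound argument via Proposition \ref{prop:controleWcoeff} and Lemma \ref{lem:lowerbound}. The argument is correct; your explicit remark that $2^{-jH(k_j(t)2^{-j})}$ is comparable to $2^{-jH(t)}$ is a detail the paper leaves implicit but is indeed the point where Condition \ref{condibis} enters the lower bounds.
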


\begin{proof}
  We will slightly modify the proofs  done in the
previous sections for the MBM. As previously, it suffices to work on $[0,1)$. Let us first focus on the three upper
bounds. On this purpose, for all $j \in \N$, we define the random series
\[ {f}_{H,j} := \sum_{k \in \Z} 2^{-j H(k2^{-j})} \varepsilon_{j,k} \psi(2^j \cdot - k). \]
Let us start by showing the existence of slow points
\eqref{eqn:slow1}. As previously, we take $m \in \N$ such that
$\frac{1}{m}<a$ and on an event of probability $1$, Theorem \ref{thm:procedure} allows to consider  $t
\in S_{\text{low,m}}^\mu$, for some $\mu >0$. Now, if
$s \in (0,1)$ is such that $2^{-n} \leq |s-t| \leq 2^{-n+1}$, we write
\begin{align*}
\left|f_{H}(t)-f_{H}(s)\right| & \leq   \left|\sum_{j=0}^n ( f_{H,j}(t)-f_{H,j}(s)) \right|  + \sum_{j \geq n+1}  \left|f_{H,j}j(t)\right|  + \sum_{j \geq n+1}  \left| f_{H,j}(s) \right|. 
\end{align*}
As in Lemma \ref{lemma1}, we have
\begin{align*}
\left|\sum_{j=0}^n ( f_{H,j}(t)-f_{H,j}(s)) \right| & \leq |s-t| \left(\sum_{j=0}^n \sum_{k \in \Z} |\varepsilon_{j,k}|  2^{j(1- H(k2^{-j}))} |D_t \psi(2^jx-k)| \right)
\end{align*}
for some $x$ between $s$ and $t$. Then, similarly to \eqref{decomposomme}, we deduce, using the fast decay property \eqref{eqn:fastdecay},
\[ \sum_{j=0}^n \sum_{k \in \Z} |\varepsilon_{j,k}|  2^{j(1- H(k2^{-j}))} |D_t \psi(2^jx-k)|  \leq c_1 \mu \sum_{j=0}^n \sum_{k \in \Z} \frac{2^{j(1- H(k2^{-j}))}}{(3+|2^jx-k|)^L},\]
for $L$ sufficiently large and whose value will be specified later and a deterministic positive constants $c_1$ which only depends on $\psi$, $L$ and $m$. Now, if $k \in \Z$ is such that $|t-k2^{-j}| \leq 2^{-j/2}$ then, by Condition \ref{condibis}, $|H(t)-H(k2^{-j})| \leq 2 c_t j^{-1}$ and
\[ \frac{2^{j(1- H(k2^{-j}))}}{(3+|2^jx-k|)^L} \leq 2^{j(1-H(t))} \frac{2^{2 c_t}}{(3+|2^jx-k|)^L}.  \]
On the other hand, if $|t-k2^{-j}| > 2^{-j/2}$, we write
\[ \frac{2^{j(1- H(k2^{-j}))}}{(3+|2^jx-k|)^L} \leq 2^{j(1-H(t))} \frac{2^{j(b-a)}}{(3+|2^jx-k|)^L}  \]
and then, if $L \geq 2$ is such $b-a< L/2-1$, as $|2^jx-k|> 2^{j/2}-2$, we get
\[ \frac{2^{j(1- H(k2^{-j}))}}{(3+|2^jx-k|)^L} \leq 2^{j(1-H(t))} \frac{1}{(3+|2^jx-k|)^2} . \]
In total, we obtain
\begin{align*}
\left|\sum_{j=0}^n ( f_{H,j}(t)-f_{H,j}(s)) \right| & \leq c_2 \mu |t-s|  \sum_{j=0}^n 2^{j(1-H(t))} \sum_{k \in \Z} \frac{1}{(3+|2^jx-k|)^2} \\
& \leq c_3 \mu |t-s| \sum_{j=0}^n 2^{j(1-H(t))} \\
& \leq c_4 \mu |t-s|  2^{n(1-H(t))} \\
& \leq c_5 |t-s|^{H(t)},
\end{align*}
where $c_2$, $c_3$, $c_4$ and $c_5$ are deterministic positive constants not depending on any relevant quantity. Modifying the proofs of Lemma \ref{lemma2} and Theorem \ref{thm:slow} in exactly the same way, we obtain
\[\sum_{j \geq n+1}  \left|f_{H,j}(t)\right| \leq  c_6 |t-s|^{H(t)}\]
and, as $|s-t| \leq 2^{-n+1}$, by Condition \ref{condibis},
\begin{align*}
\sum_{j \geq n+1}  \left|f_{H,j}(s)\right| & \leq c_6 |t-s|^{H(t)} 2^{|H(t)-H(s)|n} \leq 2^{2c_t}c_6 |t-s|^{H(t)}, 
\end{align*}
with $c_6$ a deterministic positive constants which does not depend on any relevant quantity.

Inequalities \eqref{eqn:rapid1bis} and \eqref{eqn:ord1bis} are proved in a
similar way, exploiting the alternative arguments given in Remark
\ref{rmk:rapord}.

The lower bounds are obtained by combining Lemma \ref{lem:lowerbound}
together with 
Proposition \ref{prop:controleWcoeff}.
\end{proof}

\begin{Rmk}
In the particular case where the function $H$ is constant, we recover
the random wavelet series studied in \cite{esserloosveldt}. Note
however that, even in this simple case, we improve here \cite[Theorem
2.4]{esserloosveldt} since we obtain that the three above limsup are
strictly positive, even if the wavelet is \textit{not} compactly supported, see \cite[Remark 5.2]{esserloosveldt}. 
\end{Rmk}

\begin{Rmk}
A careful look at the proofs shows that the random series $f_H$ could also be defined through a biorthogonal system of vaguelets, see \cite{MR1325535,Meyer:97,MR3119252}. Recall that a family of functions $\Psi_{j,k}$ is called vaguelets if it satisfies a property of localization
$$
|\Psi_{j,k}(t)|\leq C 2^{j/2} (1+|2^jt-k|)^{-1-\alpha_1} \quad \forall \, t \in \R,
$$ 
a property of oscillation
$$
\int_{\R} \Psi_{j,k}(t) dt =0
$$
and a property of regularity
$$
|\Psi_{j,k}(t)- \Psi_{j,k}(s)| \leq C 2^{j(\alpha_2+1/2)}|t-s|^{\alpha_2} \quad \forall \, s,t \in \R
$$
for some constant $C>0$ and $0<\alpha_2<\alpha_1<1$.
\end{Rmk}


Let us end this section by considering a third process. 
In \cite{MR2743002}, the authors have proved that the process $\{Z(t) \, : \, t \in \R \}$ defined for each $t \in \R$ as
\begin{equation} \label{eqn:toconsiderforZ}
Z(t) = \sum_{j \in \Z} \sum_{k \in \Z} 2^{-jH(k2^{-j})} \varepsilon_{j,k} \left( \Psi(2^jt-k,H(k 2^{-j}))-\Psi(-k,H(k 2^{-j}) \right)\end{equation}
has common property with MBM. Namely,
\begin{enumerate}[(a)]
\item when the function $H$ is constant, it reduces to FBM.
\item the trajectories of the process
\[ \overline{Z}(t):= \sum_{j=- \infty}^{-1} \sum_{k \in \Z} 2^{-jH(k2^{-j})} \varepsilon_{j,k} \left( \Psi(2^jt-k,H(k 2^{-j}))-\Psi(-k,H(k 2^{-j}) \right) \]
are almost surely $C^\infty$ functions. Thus, the regularity of $Z$ is only determined by the process
\begin{equation}\label{eqn:tildeZ}
\widetilde{Z}(t):= \sum_{j\in \N} \sum_{k \in \Z} 2^{-jH(k2^{-j})} \varepsilon_{j,k} \left( \Psi(2^jt-k,H(k 2^{-j}))-\Psi(-k,H(k 2^{-j}) \right). 
\end{equation}
\item the process $Z$ is also locally asymptotically self-similar.
\item almost surely, for all $t$, the pointwise H\"older exponent of $Z$ at $t$ is $H(t)$.
\item if $a$ and $b$ satisfy the condition
\begin{equation}\label{eqn:conditionpourZ}
1-b > (1-a)(1-ab^{-1})
\end{equation}
then there exists an exponent $d \in (b,1]$ such that, almost surely, the process $Z-B_H$ is uniformly H\"older of exponent $d$. In other words, there exists a process $X$ more regular than $B_H$ and $Z$ such that
\[B_H=Z + X. \]
\end{enumerate}
In some sense, all these facts mean that, up to an additive regular
process, if condition \eqref{eqn:conditionpourZ} holds, $Z$ is an
appropriate representation of MBM. Of course, even if condition
\eqref{eqn:conditionpourZ} does not hold, the process $Z$ has its own
interest. Here, we want to show that $Z$ still shares the same
features as MBM when one considers its pointwise regularity under the
less restrictive condition \ref{condibis}. 

\begin{Thm}\label{thm:tripoint1}
If the function $H \, : \, \R \to [a,b]$ satisfies the Condition \ref{condibis} then almost surely, for every non-empty interval $I$ of $\R$,
\begin{itemize}
\item  there exists $t \in I$ such that
\begin{equation}\label{eqn:rapid2bis}
\limsup_{s \to t} \frac{|Z(s)-Z(t)|}{|s-t|^{H(t)} \sqrt{\log|s-t|^{-1}}} < \infty.
\end{equation}
\item almost every point $t \in I$ is such that
\begin{equation}\label{eqn:ord2}
\limsup_{s \to t} \frac{|Z(s)-Z(t)|}{|s-t|^{H(t)} \sqrt{\log\log|s-t|^{-1}}} < \infty.
\end{equation}
\item  there exists $t \in I$ such that
\begin{equation}\label{eqn:slow2bis}
\limsup_{s \to t} \frac{|Z(s)-Z(t)|}{|s-t|^{H(t)}} < \infty.
\end{equation}

\end{itemize}
\end{Thm}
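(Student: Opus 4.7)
The plan is to merge the slowness strategy used for the MBM in Theorem~\ref{thm:slow} and Remark~\ref{rmk:rapord} with the coefficient-substitution trick developed for $f_H$ in the proof of Theorem~\ref{thm:main2}. By Remark~\ref{rmk:reduc01} it is enough to exhibit a slow and a rapid point in $[0,1)$ and to show that almost every point there is ordinary; moreover, since $\overline{Z}$ has almost surely $C^\infty$ trajectories, the whole analysis reduces to the field $\widetilde{Z}$ of \eqref{eqn:tildeZ}. Setting
$$
\widetilde{Z}_j(t) := \sum_{k \in \Z} 2^{-jH(k2^{-j})} \varepsilon_{j,k} \bigl( \Psi(2^j t-k, H(k2^{-j})) - \Psi(-k, H(k2^{-j})) \bigr),
$$
for $s,t \in [0,1)$ with $2^{-n}\le|s-t|\le 2^{-n+1}$ one uses the three-block decomposition
$$
|\widetilde{Z}(t)-\widetilde{Z}(s)| \le \Bigl|\sum_{j=0}^n (\widetilde{Z}_j(t)-\widetilde{Z}_j(s))\Bigr| + \sum_{j\ge n+1}|\widetilde{Z}_j(t)| + \sum_{j\ge n+1}|\widetilde{Z}_j(s)|
$$
and treats each block separately.

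For the slow point \eqref{eqn:slow2bis}, I would fix $m$ with $1/m<a$ and pick $t\in S_{\text{low,m}}^\mu$ from Theorem~\ref{thm:procedure}. Since $H(k2^{-j})$ no longer depends on $t$, the first block is treated by a Taylor expansion in $t$ alone, reducing matters to the control of $\sum_{j=0}^n\sum_{k}|\varepsilon_{j,k}|\,2^{j(1-H(k2^{-j}))}|D_t\Psi(2^j x-k,H(k2^{-j}))|$. The key step is to convert $2^{-jH(k2^{-j})}$ into $2^{-jH(t)}$, which is done exactly as in the proof of Theorem~\ref{thm:main2}: the $k$-sum is split according to whether $|t-k2^{-j}|\le 2^{-j/2}$ or not; in the close regime Condition~\ref{condibis} gives $2^{j|H(t)-H(k2^{-j})|}\le 2^{2c_t}$, and in the far regime the factor $2^{j(b-a)}$ is absorbed by choosing a sufficiently high exponent in the fast decay \eqref{eqn:fastdecay} of $\Psi$. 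The slowness inequality \eqref{eqn:slownesti} combined with the layer decomposition $\Z=\bigcup_l\Lambda^l_{j,m}(t)$ of Lemma~\ref{lemma1} then produces a bound of order $\mu\,|s-t|^{H(t)}$. The two remainder blocks are handled by the same substitution followed by the argument of Lemma~\ref{lemma2}, the constant tails $\Psi(-k,H(k2^{-j}))$ contributing an absolutely summable series thanks to Lemma~\ref{boundnormal} and the fast decay of $\Psi$.

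The upper bounds \eqref{eqn:rapid2bis} and \eqref{eqn:ord2} follow the same three-block decomposition with \eqref{eqn:slownesti} replaced by the global logarithmic bound \eqref{eq:boundnorm} for rapid points, respectively by the localised bound \eqref{eqn:moustiquet} for ordinary points, and with the layer-wise counting replaced by the summation inequalities \eqref{bound:log}, \eqref{eqn:ineforord1} and \eqref{eqn:ineforord2}; thus the arguments of Remark~\ref{rmk:rapord} transfer to $\widetilde{Z}$ and a Fubini argument delivers the almost-every statement for \eqref{eqn:ord2}. The main technical point throughout is to verify that the very weak logarithmic modulus provided by Condition~\ref{condibis} suffices to neutralise $2^{j|H(t)-H(k2^{-j})|}$ for every $(j,k)$ with $j\le n$ and $|t-k2^{-j}|\le 2^{-j/2}$: the threshold $2^{-j/2}$ is tuned precisely so that the oscillation of $H$ over that scale is $O(1/j)$, which is exactly the rate compensating the factor $2^j$. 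A less generous choice of threshold would break this compensation and force a stronger regularity assumption on $H$, so this careful scale-matching is the only genuinely delicate part of the argument.
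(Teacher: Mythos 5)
Your overall route is the paper's route: reduce to $\widetilde{Z}$, note that the Hurst exponent $H(k2^{-j})$ is frozen at the dyadic point so that the argument of Theorem~\ref{thm:main2} applies verbatim with $\psi$ replaced by $\Psi(\cdot,H(k2^{-j}))$, and run the three-block decomposition with the split $|t-k2^{-j}|\le 2^{-j/2}$ versus $|t-k2^{-j}|>2^{-j/2}$ to convert $2^{-jH(k2^{-j})}$ into $2^{-jH(t)}$ under Condition~\ref{condibis}. All of that is correct and is exactly what the paper does, just written out in full.

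There is, however, one genuine flaw: your treatment of the constant terms $2^{-jH(k2^{-j})}\varepsilon_{j,k}\Psi(-k,H(k2^{-j}))$ in the two tail blocks. You propose to bound them by observing that they form an absolutely summable series (via Lemma~\ref{boundnormal} and the fast decay of $\Psi$). Absolute summability only yields a tail of order
\begin{equation*}
\sum_{j\ge n+1} 2^{-ja}\sqrt{\log(3+j)} \;\lesssim\; 2^{-na}\sqrt{\log(3+n)},
\end{equation*}
and since the slow point $t\in S_{\text{low},m}^{\mu}$ is produced by Theorem~\ref{thm:procedure} independently of $H$, there is no reason to have $H(t)=a$; whenever $H(t)>a$ this bound is much larger than the required $|s-t|^{H(t)}\asymp 2^{-nH(t)}$ (and likewise too large for the rapid and ordinary bounds). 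This is precisely why, in the MBM case, the paper had to isolate the difference of the constant terms and invoke Lemma~\ref{lemma3} to extract a factor $|H(t)-H(s)|$. For $Z$ the situation is in fact simpler than you make it: the terms $2^{-jH(k2^{-j})}\Psi(-k,H(k2^{-j}))$ do not depend on the time variable at all, so they cancel \emph{exactly} in the increment $\widetilde{Z}(t)-\widetilde{Z}(s)$, and one only has to study the series $\sum_{j}\sum_{k} 2^{-jH(k2^{-j})}\varepsilon_{j,k}\Psi(2^j\cdot-k,H(k2^{-j}))$. This is the observation the paper makes at the outset of its proof; you should perform this cancellation \emph{before} splitting into the three blocks rather than estimating the constant terms inside the tails. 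With that correction the rest of your argument goes through.
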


\begin{proof}
We already know that it suffices to prove \eqref{eqn:rapid2bis}, \eqref{eqn:ord2} and \eqref{eqn:slow2bis} for the process $\widetilde{Z}$. Also, when one considers the increments $\widetilde{Z}(t)-\widetilde{Z}(s)$, $t,s \in \R$, the terms $ 2^{-jH(k2^{-j})}\Psi(-k,H(k2^{-j}))$ in \eqref{eqn:tildeZ} cancel and thus we just need to study the pointwise regularity of the random series
\[\sum_{j \in \N} \sum_{k \in \Z} 2^{-j H(k2^{-j})} \varepsilon_{j,k} \Psi(2^j \cdot-k,H(k2^{-j})).
\]
Then, it suffices  to replace $\psi$ by $\Psi(\cdot, H(k2^{-j}))$ in
the proof of Theorem \ref{thm:main2}. 
\end{proof}

When one wants to prove the positiveness of the limits in Theorems \ref{thm:optimal} and \ref{thm:main2}, the
strategy is to consider the biorthogonality property of the basis to
express the coefficients in terms of the increments of the
process. In the present situation, it seems that there is no obvious connexion between the random coefficients in the series \eqref{eqn:toconsiderforZ}
 and the oscillations of the process. In particular, one can not apply this strategy anymore. Therefore, the positiveness of the three
 limits remains an interesting open question which needs different
 tools than those developed in this paper.
 %


\bigskip

\bibliography{biblio}{}

\begin{thebibliography}{10}

\bibitem{MR3839281}
A.~Ayache.
\newblock {\em Multifractional stochastic fields}.
\newblock World Scientific Publishing Co. Pte. Ltd., Hackensack, NJ, 2019.
\newblock Wavelet strategies in multifractional frameworks.

\bibitem{MR2743002}
A.~Ayache and P.~R. Bertrand.
\newblock A process very similar to multifractional {B}rownian motion.
\newblock In {\em Recent developments in fractals and related fields}, Appl.
  Numer. Harmon. Anal., pages 311--326. Birkh\"{a}user Boston, Boston, MA,
  2010.

\bibitem{MR4110623}
A.~Ayache and Y.~Esmili.
\newblock Wavelet-type expansion of the generalized {R}osenblatt process and
  its rate of convergence.
\newblock {\em J. Fourier Anal. Appl.}, 26(3):Paper No. 51, 35, 2020.

\bibitem{ayacheesserkleyntssens}
A.~Ayache, C.~Esser, and T.~Kleyntssens.
\newblock Different possible behaviors of wavelet leaders of the {B}rownian
  motion.
\newblock {\em Statist. Probab. Lett.}, 150:54--60, 2019.

\bibitem{MR2027888}
A.~Ayache and M.~S. Taqqu.
\newblock Rate optimality of wavelet series approximations of fractional
  {B}rownian motion.
\newblock {\em J. Fourier Anal. Appl.}, 9(5):451--471, 2003.

\bibitem{MR2177638}
A.~Ayache and M.~S. Taqqu.
\newblock Multifractional processes with random exponent.
\newblock {\em Publ. Mat.}, 49(2):459--486, 2005.

\bibitem{bbcI00}
A.~Benassi, P.~Bertrand, S.~Cohen, and J.~Istas.
\newblock Identification of the {H}urst index of a step fractional {B}rownian
  motion.
\newblock volume~3, pages 101--111. 2000.
\newblock 19th ``Rencontres Franco-Belges de Statisticiens'' (Marseille, 1998).

\bibitem{MR1462329}
A.~Benassi, S.~Jaffard, and D.~Roux.
\newblock Elliptic {G}aussian random processes.
\newblock {\em Rev. Mat. Iberoamericana}, 13(1):19--90, 1997.

\bibitem{MR2219711}
B.~Boufoussi, M.~Dozzi, and R.~Guerbaz.
\newblock On the local time of multifractional {B}rownian motion.
\newblock {\em Stochastics}, 78(1):33--49, 2006.

\bibitem{MR2348754}
B.~Boufoussi, M.~Dozzi, and R.~Guerbaz.
\newblock Sample path properties of the local time of multifractional
  {B}rownian motion.
\newblock {\em Bernoulli}, 13(3):849--867, 2007.

\bibitem{MR1161250}
A.~Cohen.
\newblock Biorthogonal wavelets.
\newblock In {\em Wavelets}, volume~2 of {\em Wavelet Anal. Appl.}, pages
  123--152. Academic Press, Boston, MA, 1992.

\bibitem{MR1162365}
A.~Cohen, I.~Daubechies, and J.-C. Feauveau.
\newblock Biorthogonal bases of compactly supported wavelets.
\newblock {\em Comm. Pure Appl. Math.}, 45(5):485--560, 1992.

\bibitem{MR1726364}
S.~Cohen.
\newblock From self-similarity to local self-similarity: the estimation
  problem.
\newblock In {\em Fractals: theory and applications in engineering}, pages
  3--16. Springer, London, 1999.

\bibitem{MR1626706}
K.~Daoudi, J.~L\'{e}vy~V\'{e}hel, and Y.~Meyer.
\newblock Construction of continuous functions with prescribed local
  regularity.
\newblock {\em Constr. Approx.}, 14(3):349--385, 1998.

\bibitem{Daubechies:92}
I.~Daubechies.
\newblock {\em Ten lectures on wavelets}.
\newblock CBMS-NSF Regional Conference Series in Applied Mathematics, 1992.

\bibitem{dawloosveldt}
{L}. Daw and {L}. Loosveldt.
\newblock Wavelet methods to study the pointwise regularity of the generalized
  {R}osenblatt process.
\newblock {\em Electron. J. Probab.}, 27:1--45, 2022.

\bibitem{MR3119252}
G.~Didier, S.~Jaffard, and V.~Pipiras.
\newblock On the vaguelet and {R}iesz properties of {$L^2$}-unbounded
  transformations of orthogonal wavelet bases.
\newblock {\em J. Approx. Theory}, 176:94--117, 2013.

\bibitem{MR1325535}
D.~L. Donoho.
\newblock Nonlinear solution of linear inverse problems by wavelet-vaguelette
  decomposition.
\newblock {\em Appl. Comput. Harmon. Anal.}, 2(2):101--126, 1995.

\bibitem{esserloosveldt}
{C}. Esser and {L}. Loosveldt.
\newblock Slow, ordinary and rapid points for {G}aussian {W}avelets {S}eries
  and application to {F}ractional {B}rownian {M}otions.
\newblock {\em ALEA Lat. Am. J. Probab. Math. Stat.}, 19(2):1471--1495, 2022.

\bibitem{MR1922445}
K.~J. Falconer.
\newblock Tangent fields and the local structure of random fields.
\newblock {\em J. Theoret. Probab.}, 15(3):731--750, 2002.

\bibitem{MR1967698}
K.~J. Falconer.
\newblock The local structure of random processes.
\newblock {\em J. London Math. Soc. (2)}, 67(3):657--672, 2003.

\bibitem{MR833073}
J.-P. Kahane.
\newblock {\em Some random series of functions}, volume~5 of {\em Cambridge
  Studies in Advanced Mathematics}.
\newblock Cambridge University Press, Cambridge, second edition, 1985.

\bibitem{MR4398453}
T.~Kleyntssens and S.~Nicolay.
\newblock From the {B}rownian motion to a multifractal process using the
  {L}\'{e}vy-{C}iesielski construction.
\newblock {\em Statist. Probab. Lett.}, 186:Paper No. 109450, 5, 2022.

\bibitem{MR864650}
P.~G. Lemari\'{e} and Y.~Meyer.
\newblock Ondelettes et bases hilbertiennes.
\newblock {\em Rev. Mat. Iberoamericana}, 2(1-2):1--18, 1986.

\bibitem{Meyer:97}
Y.~Meyer and D.~Salinger.
\newblock {\em Wavelets and operators}, volume~1.
\newblock Cambridge university press, 1995.

\bibitem{plv95}
{ R}.~{F}. Peltier and {J}. L\'{e}vy~V\'{e}hel.
\newblock Multifractional brownian motion : definition and preliminary results.
\newblock {\em Rapport de recherche de l'{I}{N}{R}{I}{A}}, 2645, 1995.

\end{thebibliography}
\bibliographystyle{plain}

\end{document}